
\documentclass[smallextended,referee,envcountsect]{svjour3}
\smartqed
\usepackage{graphicx}
\usepackage{epstopdf}
\usepackage{amsopn}
\usepackage{booktabs}
\usepackage{multirow}
\usepackage{makecell}
\usepackage{placeins}

\usepackage{amsmath,amssymb,amsfonts}%
\usepackage{algorithm}
\usepackage{algpseudocode}

\usepackage{caption}

\newcommand{\dd}{\,{\rm d}}
\newcommand{\dual}[1]{\left\langle {#1} \right\rangle}

\usepackage{tikz}
\usetikzlibrary{positioning}

\newcommand{\bs}{\boldsymbol}

\usepackage{enumitem}
\usepackage{comment}

\newcommand{\RV}[1]{#1}
\definecolor{atomictangerine}{rgb}{1.0, 0.6, 0.4}

\usepackage{titlesec}

\titlespacing*{\section}
  {0pt}      
  {4ex plus 0.5ex minus 0.2ex}  
  {1.5ex plus 0.2ex}              

\titlespacing*{\subsection}
  {0pt}
  {2ex plus 0.4ex minus 0.2ex}
  {1ex plus 0.2ex}

\titlespacing*{\subsubsection}
  {0pt}
  {1.2ex plus 0.3ex minus 0.2ex}
  {0.75ex plus 0.2ex}
  
\journalname{JOTA}

\begin{document}

\title{HNAG$^{++}$: An Accelerated Gradient Method with a Refined Asymptotic Rate for Strongly Convex Optimization}
\titlerunning{HNAG$^{++}$: Accelerated Gradient Method with Refined Rate}


\author{Long Chen         \and
        Zeyi Xu
}
\authorrunning{L. Chen \and Z. Xu} 

\institute{Long Chen \at
             Department of Mathematics, University of California, Irvine, CA 92697, USA.\\
              lchen7@uci.edu
           \and
              Zeyi Xu, Corresponding author  \at
              Department of Mathematics, University of California, Irvine, CA 92697, USA.\\
              zeyix1@uci.edu
}

\date{Received: date / Accepted: date}

\maketitle

\begin{abstract}
Two accelerated first-order methods, HNAG$^+$ and HNAG$^{++}$, are introduced for smooth strongly convex optimization. They are derived from the \RV{Hessian-driven Nesterov Accelerated Gradient} (HNAG) flow by optimizing the coercivity of shifted Lyapunov functions. \RV{Let $\kappa=L/\mu$, where $\mu$ is the strong-convexity constant and $L$ is the gradient Lipschitz constant}. HNAG$^+$ attains the optimal global rate $1-2/\sqrt{\kappa}$, matching the information-theoretic lower bound. For functions with local asymptotic symmetry at the minimizer, HNAG$^{++}$ attains the asymptotic rate $1-2\sqrt{2/\kappa}$. This matches the best known asymptotic rate under $\mathcal C^2$ regularity, while applying to a broader function class. Numerical experiments confirm the predicted rates and \RV{show favorable performance} against existing accelerated schemes.\end{abstract}
\keywords{Convex Optimization \and Lyapunov Analysis \and Accelerated Gradient Methods
}
\subclass{90C25 \and 
65K05\and 
65Y20\and 
37N40\and 
49M29  
}

\section{Introduction}
We consider the unconstrained smooth convex optimization problem
\begin{equation*}
\min_{x\in\mathbb{R}^d} f(x),
\end{equation*}
where $f$ belongs to the class $\mathcal S_{\mu,L}$ of continuously differentiable, $\mu$-strongly convex functions with $L$-Lipschitz continuous gradients. Strong convexity guarantees the existence
of a unique global minimizer $x^{\star}$. We study first-order methods, which
rely solely on gradient information, and generate iterates 
$\{x_k\}$ converging to $x^{\star}$.

Convergence is measured by a nonnegative error quantity $E_k$, such as $\|x_k-x^\star\|^2$, $f(x_k)-f(x^\star)$, or a Lyapunov function with comparable scaling. We say that the sequence $\{E_k\}$
converges linearly if
$$
E_k \le C r^k, \qquad k \ge 1,
$$
for some constants $C>0$ and $r\in(0,1)$.
More precisely, $E_k$ is said to converge \emph{R-linearly} with asymptotic rate
$r$ if
$$
\limsup_{k\to\infty} E_k^{1/k} \le r.
$$

The condition number of $f$ is $\kappa = L/\mu$, and we denote its reciprocal by
$\rho = 1/\kappa = \mu/L$. 
We focus on ill-conditioned problems with $\kappa \gg 1$ (equivalently,
$\rho \ll 1$) and express convergence rates to leading order in $\rho$. For
example,
$$
r = (1 + c\sqrt{\rho})^{-1}
\;\approx\;
1 - c\sqrt{\rho},
\qquad \rho \ll 1.
$$
To reach an accuracy level $r^k \le \varepsilon$, one requires on the order of
$\ln(1/\varepsilon)\,\sqrt{\kappa}/c$ iterations. Consequently, increasing the
constant $c$ directly reduces the iteration complexity; doubling $c$
approximately halves the iteration count and leads to substantial computational
savings.

\subsection{Related Work}
While Gradient Descent (GD) converges linearly at a rate $(\kappa-1)/(\kappa+1)\approx 1-2/\kappa$ with the optimal step size $2/(L+\mu)$ for smooth strongly convex functions, its performance degrades rapidly as $\kappa$ increases.

Momentum-based acceleration overcomes this bottleneck. Although the classic Heavy-Ball (HB) method~\cite{polyak1964some} lacks global convergence guarantees for general strongly convex objectives~\cite{lessard2016analysis,goujaud2023provable}, global and accelerated convergence is established by Nesterov’s Accelerated Gradient (NAG)~\cite{nesterov1983method,nesterov2003introductory} and recent variants like AOR-HB~\cite{wei2024accelerated}, which typically achieve a rate $1-1/\sqrt{\kappa}$.

More sophisticated schemes push the theoretical limits. The Triple Momentum (TM) method~\cite{van2017fastest} and the Information-Theoretic Exact Method (ITEM)~\cite{taylor2021optimal_gradient_method} attain the optimal first-order rate $1-2/\sqrt{\kappa}$. Under stronger regularity assumptions, the Optimized Gradient Method (OGM)~\cite{kim2016optimized_first_order_methods} for quadratics and $C^2$-Momentum (C2M)~\cite{vanScoy2025fastest_firstorder_twiceC2} for $f\in \mathcal C^2$, i.e., the Hessian of $f$ is continuous, achieve the sharper asymptotic rate $1-2\sqrt{2/\kappa}$.

\subsection{Approach and Contribution}
Optimization algorithms can be interpreted as discretizations of continuous-time dynamics; see, for example,~\cite{su2016differential,wibisono2016variational,AttoucChbaniFadiliRiahi2022First-order,shi2022understanding,luoDifferentialEquationSolvers2021a,Chen;Hao;Wei:2025Accelerated}. However, compared with the Performance Estimation Problem (PEP) framework~\cite{DroriTeboulle2014} and the Integral Quadratic Constraint (IQC) framework~\cite{lessard2016analysis,taylor2018lyapunov}, the ODE approach has not yet achieved the same level of sharpness in convergence-rate analysis.

The present work bridges this gap. We study acceleration through the Hessian-driven Nesterov Accelerated Gradient (HNAG) flow introduced in \cite{chen2019orderoptimizationmethodsbased}. 
The main tool is a Lyapunov analysis with a coercivity enhancement mechanism based on primal and dual shifts.

Compared with PEP~\cite{DroriTeboulle2014} and IQC~\cite{lessard2016analysis,taylor2018lyapunov}, our approach is more explicit and geometric. Compared with existing accelerated schemes such as TM and C2M, the main contribution is not the rate alone, but the design principle and the Lyapunov analysis. More precisely, our main contributions are:

\begin{enumerate}[leftmargin=16pt,itemsep=4pt]

\item \textbf{HNAG$^{+}$: optimal rate from a coercivity-based design principle.} \\
We show that HNAG$^{+}$ reaches the optimal rate $1-2/\sqrt{\kappa}$ on $\mathcal S_{\mu,L}$. The resulting scheme is closely related to TM~\cite{van2017fastest}; indeed, TM appears as a special parameter choice within our framework. The contribution here is the continuous-time derivation, the coercivity-based interpretation, and a Lyapunov analysis with fixed, closed-form parameters.

\item \textbf{HNAG$^{++}$: faster asymptotic rate on a broader function class.} \\
We introduce the LAS class $\mathcal S_{\mu,L}^{\rm LAS}(x^\star)$ in Section~\ref{sec:LAS}, where the Bregman asymmetry is negligible relative to the local Bregman energy near $x^\star$. For this class, HNAG$^{++}$ attains the faster asymptotic rate $1-2\sqrt{2/\kappa}$ with step size $\sqrt{2/\kappa}$. This matches the best known asymptotic rate proved for C2M~\cite{vanScoy2025fastest_firstorder_twiceC2}, but uses simpler parameters and a broader function class as $\mathcal C^2\cap \mathcal S_{\mu,L} \subset \mathcal S_{\mu,L}^{\rm LAS}(x^\star)$.
\end{enumerate}

Table~\ref{tab:compare_methods} summarizes the rates of several accelerated first-order methods. 

\begin{table}[htbp]
\centering
\renewcommand{\arraystretch}{1.3}
\begin{tabular}{@{}l l l c@{}}
  \toprule
  Method & Function class & Reference & Rate (leading order)\\
  \midrule
  NAG / HNAG
  & $\mathcal S_{\mu,L}$
  & \cite{nesterov2003introductory,chen2019orderoptimizationmethodsbased}, \textbf{Sec.~2}
  & $1 - 1/\sqrt{\kappa}$ \\

  NAG / HNAG
  & $\mathcal S_{\mu, L}^{\rm LAS}(x^{\star})$
  & \textbf{Sec.~5}
  & asy $1 - 2/\sqrt{\kappa}$ \\

  \textbf{HNAG$^{+}$}
  & $\mathcal S_{\mu,L}$
  & \textbf{Sec.~4}
  & $\mathbf{1 - 2/\sqrt{\kappa}}$ \\

  TM / ITEM / GAG
  & $\mathcal S_{\mu,L}$
  & \cite{van2017fastest,taylor2021optimal_gradient_method,Wu2024generalizedAGD}
  & $1 - 2/\sqrt{\kappa}$ \\

  C2M
  & $\mathcal C^2\cap \mathcal S_{\mu,L}$
  & \cite{vanScoy2025fastest_firstorder_twiceC2}
  & asy $1 - 2\sqrt{2/\kappa}$ \\

  \midrule
  \textbf{HNAG$^{++}$}
  & $\mathcal S_{\mu, L}^{\rm LAS}(x^{\star})$
  & \textbf{Sec.~5}
  & \textbf{asy} $\mathbf{1 - 2\sqrt{2/\kappa}}$ \\
  \bottomrule
\end{tabular}
\caption{Convergence rates for accelerated first-order methods under different smoothness assumptions: $\mathcal C^2\cap\mathcal S_{\mu,L}\subset \mathcal S_{\mu, L}^{\rm LAS}(x^{\star})\subset \mathcal S_{\mu,L}$. Here ``asy'' denotes an asymptotic rate; the other entries are global linear rates.}
\vskip -12pt
\label{tab:compare_methods}
\end{table}

\subsection{Preliminaries}
Let $f: \mathbb{R}^d \to \mathbb{R}$ be differentiable. We use $\langle \cdot, \cdot \rangle$ for the inner product of $\mathbb R^d$.

The Bregman divergence of $f$ between $x, y \in \mathbb{R}^d$ is 
\begin{equation*}
D_f(y,x) := f(y) - f(x) - \langle \nabla f(x),\, y - x \rangle .
\end{equation*}
In general, $D_f$ is not symmetric, i.e., $D_f(y,x) \neq D_f(x,y)$.  
Its symmetrization, called the symmetrized Bregman divergence, is  
\begin{equation}\label{eq:symBre}
D_f(y,x) + D_f(x,y)
= \langle \nabla f(y) - \nabla f(x),\, y - x \rangle .
\end{equation}
Define the difference of Bregman divergences
\begin{equation*}
\Delta_f(x,y) := D_f(y,x) - D_f(x,y).
\end{equation*}
We can rewrite the symmetric Bregman divergence via the asymmetry:
\begin{equation}\label{eq:Brelation}
\langle \nabla f(x) - \nabla f(y),\, x - y \rangle 
= 2D_f(x,y) + \Delta_f(x,y).
\end{equation}
The term $\Delta_f(x,y)$ will be treated as a high-order perturbation in the asymptotic analysis. 

The following three-point Bregman identity holds~\cite{chen1993convergence}:
\begin{equation}\label{eq:Bregmanidentity}
\langle \nabla f(y) - \nabla f(x),\, y - z \rangle 
= D_f(y,x) + D_f(z,y) - D_f(z,x),
\end{equation}
which is a generalization of the classical identity of squares:
\begin{equation}\label{eq:squares}
\langle y-x,\, y-z \rangle
=
\frac12\bigl(
\|y-x\|^2 + \|z-y\|^2 - \|z-x\|^2
\bigr).
\end{equation}

The function $f$ is $\mu$-strongly convex if for some $\mu > 0$,
\begin{equation*}
D_f(y,x) \geq \frac{\mu}{2} \|y - x\|^2, 
\qquad \forall x, y \in \mathbb{R}^d.
\end{equation*}
It is $L$-smooth, for some $L > 0$, if its gradient is $L$-Lipschitz:
\begin{equation*}
\|\nabla f(y) - \nabla f(x)\| \leq L \|y - x\|, 
\qquad \forall x, y \in \mathbb{R}^d.
\end{equation*}

Let $\mathcal{S}_{\mu,L}$ be the class of all differentiable functions that are both $\mu$-strongly convex and $L$-smooth.  
For $f \in \mathcal{S}_{\mu,L}$, the Bregman divergence satisfies~\cite{nesterov2003introductory}
\begin{equation}\label{eq:muLsquares}
\frac{\mu}{2} \| x - y \|^2 
\leq D_f(x,y) 
\leq \frac{L}{2} \| x - y \|^2, 
\qquad \forall x, y \in \mathbb{R}^d,
\end{equation}
and, in terms of gradient differences,
\begin{equation}\label{eq:cocov}
\frac{1}{2L} \|\nabla f(x) - \nabla f(y)\|^2 
\leq D_f(x,y) 
\leq \frac{1}{2\mu} \|\nabla f(x) - \nabla f(y)\|^2, 
\qquad \forall x, y \in \mathbb{R}^d.
\end{equation}

Let $f^*$ be the convex conjugate of $f$.  
It is well known that if $f \in \mathcal{S}_{\mu,L}$, then $f^* \in \mathcal{S}_{1/L,\,1/\mu}$~\cite[Theorem~5.26]{beck2017first}.  

\subsection{Main idea}
\RV{For the HNAG$^+$ construction, we assume $L>\mu>0$ as the case $L=\mu$ is degenerate: one gradient step with step size $1/\mu$ reaches the minimizer.}
Given an $f\in \mathcal S_{\mu,L}$ with $L>\mu$, the shifted function
\begin{equation*}
f_{-\mu}(x) := f(x) - \frac{\mu}{2}\|x - x^{\star}\|^2
\end{equation*}
is convex and shares the same global minimum $x^{\star}$.  
Let $(f_{-\mu})^*$ be its convex conjugate.  
Since $f_{-\mu}$ is $(L-\mu)$-smooth, $(f_{-\mu})^*$ is $\tfrac{1}{L-\mu}$-strongly convex.  
Then the shift in the dual space,
\begin{equation*}
\bigl((f_{-\mu})^*\bigr)_{-\frac{1}{{L-\mu}}}(\chi) 
= (f_{-\mu})^*(\chi) - \frac{1}{2(L-\mu)}\|\chi\|^2 ,
\end{equation*}
is still convex.  
We use this {\it double shift} in the primal and dual spaces to construct a Lyapunov function. The primal shift $f_{-\mu}$ increases the coercivity, while the dual shift permits a larger step size.

\section{HNAG: Method and Convergence}

We recall the Hessian-driven Nesterov Accelerated Gradient (HNAG) method
introduced in~\cite{chen2019orderoptimizationmethodsbased}. We illustrate how accelerated convergence arises from a strong Lyapunov property and highlight the key algorithmic ingredients.

\subsection{Flow and Discretization}
The continuous-time HNAG flow is
\begin{equation}\label{eq:flow}
\left\{
\begin{aligned}
x' &= y - x - \beta \nabla f(x), \\
y' &= x - y - \tfrac{1}{\mu}\nabla f(x),
\end{aligned}
\right.
\end{equation}
where $\mu$ is the strong convexity parameter of $f$.
Let $\boldsymbol z=(x,y)$ and define
\begin{equation}\label{eq:G}
\mathcal G(\boldsymbol z)
=\bigl(y-x-\beta\nabla f(x),\; x-y-\tfrac{1}{\mu}\nabla f(x)\bigr)^{\intercal},
\end{equation}
so that \eqref{eq:flow} can be written compactly as $\boldsymbol z'=\mathcal G(\boldsymbol z)$.

An implicit--explicit (IMEX) discretization of~\eqref{eq:flow} yields
\begin{subequations}\label{eq:HNAG}
\begin{align}
\label{eq:HNAGx}
\frac{x_{k+1}-x_k}{\alpha} &= y_k - x_{k+1} - \beta \nabla f(x_k), \\
\label{eq:HNAGy}
\frac{y_{k+1}-y_k}{\alpha} &= x_{k+1} - y_{k+1} - \tfrac{1}{\mu}\nabla f(x_{k+1}),
\end{align}
\end{subequations}
where $x_{k+1}$ is updated first in \eqref{eq:HNAGx} and then used to compute $y_{k+1}$ in \eqref{eq:HNAGy}.

\vskip -10pt
\subsection{Algorithm}
Algorithm~\ref{alg:HNAG} gives an equivalent form for 
$$\alpha=\sqrt{\mu/L}, \quad \alpha\beta=1/L.$$ It uses one gradient evaluation per iteration and avoids division by the possibly small $\mu$. It follows from \eqref{eq:HNAG} by setting $v_{k+1}=\alpha y_k$, multiplying \eqref{eq:HNAGy} by $\alpha^2$ with an index shift to obtain line $3$, and multiplying \eqref{eq:HNAGx} by $\alpha$ to obtain line $4$.

\vskip -10pt
\begin{algorithm}
\caption{HNAG}
\label{alg:HNAG}
\linespread{1.3}\selectfont
\begin{algorithmic}[1]
\Require $x_0,v_0\in\mathbb R^d$, parameters $L\ge\mu>0$
\State $\alpha\gets\sqrt{\mu/L}$
\For{$k=0,1,2,\ldots$}
\State $v_{k+1}\gets \tfrac{1}{1+\alpha}\!\left(v_k+\alpha^2 x_k-\tfrac{1}{L}\nabla f(x_k)\right)$
\State $x_{k+1}\gets \tfrac{1}{1+\alpha}\!\left(x_k+v_{k+1}-\tfrac{1}{L}\nabla f(x_k)\right)$
\EndFor
\State \textbf{Return:} $x_{k+1}$ and ${y_{k}} = v_{k+1}/\alpha$
\end{algorithmic}
\end{algorithm}


\subsection{Strong Lyapunov Property}
We use the following Lyapunov notation throughout the paper. First, define
\begin{equation}\label{eq:lyapunov_strong_HNAG}
E(\boldsymbol z)=E(x,y)
:=D_f(x,x^\star)+\frac{\mu}{2}\|y-x^\star\|^2
=
f(x)-f(x^\star)+\frac{\mu}{2}\|y-x^\star\|^2,
\end{equation}
where \(D_f(x,x^\star)=f(x)-f(x^\star)\) since \(\nabla f(x^\star)=0\). Since
\(f\in\mathcal S_{\mu,L}\), we have \(E(\boldsymbol z)\ge0\), with equality if
and only if \(x=y=x^\star\).

For the \(\mu\)-shifted function \(f_{-\mu}\), we use a different font $\mathcal E(x,y)$.
For the dual-space shift, we add a tilde: $\widetilde E(x,y)$ or $\widetilde{\mathcal E}(x,y)$.
For partial shifts with \(\hat\mu\in[0,\mu]\), the shift is stated explicitly in
the notation; see \eqref{eq:lya-muhat}. We write \(\mathcal E^+(x,y)\) for the
rescaled Lyapunov function associated with HNAG$^+$; see
\eqref{eq:lyaderivation_scaled}.

\begin{lemma}\label{lem:cont_strong}
Let $\mathcal G(\boldsymbol z)$ be the vector field of the HNAG flow~\eqref{eq:flow}.
Then $E(\boldsymbol z)$ satisfies the following strong Lyapunov condition
\begin{equation}\label{eq:A-HNAG}
-\langle\nabla E(\boldsymbol z),\mathcal G(\boldsymbol z)\rangle
\;\ge\;
E(\boldsymbol z)+\beta\|\nabla f(x)\|^2+\frac{\mu}{2}\|x-y\|^2 .
\end{equation}
\end{lemma}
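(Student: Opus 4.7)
The plan is a direct computation: evaluate $\nabla E$, dot it against $-\mathcal G(\bs z)$, and then match each term on the right-hand side of \eqref{eq:A-HNAG}. From \eqref{eq:lyapunov_strong_HNAG} we have $\nabla_x E(\bs z)=\nabla f(x)$ and $\nabla_y E(\bs z)=\mu(y-x^\star)$, so
\[
-(\nabla E(\bs z),\mathcal G(\bs z))
= \langle \nabla f(x),\, x-y+\beta\nabla f(x)\rangle
+ \langle \mu(y-x^\star),\, y-x+\tfrac{1}{\mu}\nabla f(x)\rangle .
\]
The $\beta\|\nabla f(x)\|^2$ term is already isolated; the remaining task is to reorganize the other three.

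First I would collect the two inner products involving $\nabla f(x)$. Using $\nabla f(x^\star)=0$, the cross terms combine telescopically:
\[
\langle \nabla f(x),x-y\rangle + \langle y-x^\star,\nabla f(x)\rangle
=\langle \nabla f(x),x-x^\star\rangle
= D_f(x,x^\star)+D_f(x^\star,x),
\]
by the symmetrized Bregman identity recalled in the preliminaries. Next I would handle the purely quadratic cross term via the polarization identity $2\langle y-x^\star,y-x\rangle=\|y-x^\star\|^2+\|y-x\|^2-\|x-x^\star\|^2$, which yields
\[
\mu\langle y-x^\star,y-x\rangle
= \tfrac{\mu}{2}\|y-x^\star\|^2 + \tfrac{\mu}{2}\|y-x\|^2 - \tfrac{\mu}{2}\|x-x^\star\|^2 .
\]

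Putting the pieces together gives an \emph{equality} of the form $-(\nabla E,\mathcal G) = D_f(x,x^\star)+D_f(x^\star,x)+\beta\|\nabla f(x)\|^2 + \tfrac{\mu}{2}\|y-x^\star\|^2 + \tfrac{\mu}{2}\|y-x\|^2 - \tfrac{\mu}{2}\|x-x^\star\|^2$. The final step is to absorb the unwanted $-\tfrac{\mu}{2}\|x-x^\star\|^2$ by invoking $\mu$-strong convexity of $f$, namely $D_f(x^\star,x)\geq \tfrac{\mu}{2}\|x-x^\star\|^2$, which cancels the negative term exactly and leaves $D_f(x,x^\star)+\tfrac{\mu}{2}\|y-x^\star\|^2 = E(\bs z)$ plus the desired dissipation $\beta\|\nabla f(x)\|^2+\tfrac{\mu}{2}\|x-y\|^2$.

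There is no serious obstacle once the bookkeeping is done correctly; the computation is entirely algebraic. The only place that deserves care is recognizing that the coefficients $\mu$ in the $y$-component of $\nabla E$ and $\tfrac{1}{\mu}$ in the $y'$ equation are matched precisely so that the $\nabla f(x)$ cross terms collapse into $\langle \nabla f(x),x-x^\star\rangle$; this matching, together with the strong-convexity lower bound on $D_f(x^\star,x)$, is what makes the Lyapunov inequality tight and produces no residual error.
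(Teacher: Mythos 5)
Your computation is correct and matches the paper's own proof step for step: expand $-(\nabla E,\mathcal G)$, collapse the $\nabla f(x)$ cross terms into $\langle\nabla f(x),x-x^\star\rangle = D_f(x,x^\star)+D_f(x^\star,x)$, split the quadratic cross term by polarization, and absorb $-\tfrac{\mu}{2}\|x-x^\star\|^2$ via $D_f(x^\star,x)\geq \tfrac{\mu}{2}\|x-x^\star\|^2$. No gap.
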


\begin{proof}
A direct computation yields
$$
\begin{aligned}
&\quad -\langle\nabla E(\boldsymbol z),\mathcal G(\boldsymbol z)\rangle \\ 
&=
\dual{\nabla f(x),x-x^\star}
+\beta\|\nabla f(x)\|^2
+\mu\|y-x^\star\|^2
-\mu\dual{y-x^\star,x-x^\star} \\
&=
E(\boldsymbol z)
+D_f(x^\star,x)
+\beta\|\nabla f(x)\|^2
+\frac{\mu}{2}\|x-y\|^2
-\frac{\mu}{2}\|x-x^\star\|^2,
\end{aligned}
$$
where we use identities of squares~\eqref{eq:squares} to expand the cross term. 
Using the $\mu$-convexity $D_f(x^\star,x)\ge \tfrac{\mu}{2}\|x-x^\star\|^2$ gives \eqref{eq:A-HNAG}.
$\Box$ \end{proof}

The analysis hinges on the {\it strong Lyapunov property} \cite{ChenLuo2021unified}
\begin{equation}\label{eq:strongLya}
-\langle\nabla E(\boldsymbol z),\mathcal G(\boldsymbol z)\rangle\;\ge\; c_L\,E(\boldsymbol z),
\end{equation}
where $c_L>0$ is analogous to the coercivity constant in PDE theory. Let $\boldsymbol z(t)$ be the solution of the ODE $\boldsymbol z'=\mathcal G(\boldsymbol z)$. By the chain rule and the strong Lyapunov property \eqref{eq:strongLya},
$$
\frac{\dd}{\dd t}E(\boldsymbol z(t))
= \langle \nabla E(\boldsymbol z(t)),\, \boldsymbol z'(t)\rangle
= \langle \nabla E(\boldsymbol z(t)),\,\mathcal G(\boldsymbol z(t))\rangle
\le -c_L\,E(\boldsymbol z(t)).
$$
Integrating this differential inequality, we obtain the exponential decay
$$
E(\boldsymbol z(t))\le E(\boldsymbol z(0))e^{-c_L t}, \qquad t\geq 0.
$$

Therefore, larger $c_L$ implies faster energy dissipation and faster convergence of a stable discretization.


\subsection{Convergence analysis}
As a conceptual benchmark, if the implicit Euler scheme $\boldsymbol z_{k+1}-\boldsymbol z_k=\alpha\mathcal G(\boldsymbol z_{k+1})$ were used,~\eqref{eq:strongLya} would directly imply linear convergence with rate $(1+c_L\alpha)^{-1}$. However, fully implicit schemes are infeasible. Some terms are evaluated using values at the current step, and this introduces a time lag which will be compensated by the convexity of the Lyapunov function.

We provide one-step convergence for \eqref{eq:HNAG} to illustrate the procedure. To keep the analysis clear, we focus on the leading order of $\alpha = \sqrt{\rho}$. 
\begin{lemma}\label{lem:disc_strong}
Let $(x_k,y_k)$ be the iterates generated by~\eqref{eq:HNAG} and $E$ defined by \eqref{eq:lyapunov_strong_HNAG}. Then
\begin{equation}\label{eq:onestep}
\begin{aligned}
(1+\alpha)E(\bs z_{k+1}) \;\le\;& E(\bs z_k)
+ \Bigl(\frac{\alpha^2}{2\mu}-\frac{\alpha\beta}{2}\Bigr)\|\nabla f(x_{k+1})\|^2
- \frac{\alpha\beta}{2}\|\nabla f(x_k)\|^2 \\
\qquad-\frac{\alpha \mu}{2}\|x_{k+1} - y_{k+1}\|^2 &+ \frac{\alpha\beta}{2}\|\nabla f(x_{k+1})-\nabla f(x_k)\|^2
- D_f(x_k,x_{k+1}).
\end{aligned}
\end{equation}
\end{lemma}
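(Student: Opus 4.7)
The plan is to mimic, at the discrete level, the continuous strong Lyapunov property of Lemma~\ref{lem:cont_strong}. I would compute $(1+\alpha)E(\bs z_{k+1}) - E(\bs z_k)$ directly, splitting it as
\[
\bigl[f(x_{k+1}) - f(x_k)\bigr] + \alpha\bigl[f(x_{k+1}) - f(x^{\star})\bigr] + \tfrac{\mu}{2}\bigl[(1+\alpha)\|y_{k+1} - x^{\star}\|^2 - \|y_k - x^{\star}\|^2\bigr],
\]
and analyze the function and quadratic groups separately before merging them through the scheme~\eqref{eq:HNAG}.

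For the function part I would use the Bregman identities
\[
f(x_{k+1}) - f(x_k) = -D_f(x_k, x_{k+1}) + \dual{\nabla f(x_{k+1}),\, x_{k+1} - x_k},
\]
\[
f(x_{k+1}) - f(x^{\star}) = -D_f(x^{\star}, x_{k+1}) + \dual{\nabla f(x_{k+1}),\, x_{k+1} - x^{\star}},
\]
which produce the desired $-D_f(x_k,x_{k+1})$ term and collect the rest into a single bilinear remainder $\dual{\nabla f(x_{k+1}),\, (x_{k+1}-x_k) + \alpha(x_{k+1}-x^{\star})}$ together with the penalty $-\alpha D_f(x^{\star},x_{k+1})$. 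For the quadratic part I apply $\|a\|^2 - \|b\|^2 = 2\dual{b, a-b} + \|a-b\|^2$ and substitute the $y$-update $y_{k+1} - y_k = \alpha(x_{k+1}-y_{k+1}) - \tfrac{\alpha}{\mu}\nabla f(x_{k+1})$. The telescoping identity $(y_{k+1}-x^{\star}) + (x_{k+1}-y_{k+1}) = x_{k+1}-x^{\star}$ then converts the mixed inner product into $\tfrac{\alpha\mu}{2}\|x_{k+1}-x^{\star}\|^2 - \tfrac{\alpha\mu}{2}\|x_{k+1}-y_{k+1}\|^2$, leaving a $-\tfrac{\mu}{2}\|y_{k+1}-y_k\|^2$ remainder.

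To merge the two parts I would invoke~\eqref{eq:HNAGx} to rewrite $(x_{k+1}-x_k) + \alpha(x_{k+1}-y_{k+1}) = \alpha(y_k - y_{k+1}) - \alpha\beta\nabla f(x_k)$, and then apply the $y$-update once more on $y_k - y_{k+1}$. The critical cancellation occurs here: the cross term $\alpha^2\dual{\nabla f(x_{k+1}),\, x_{k+1}-y_{k+1}}$ generated by the bilinear sum exactly annihilates an identical term hidden inside the expansion of $-\tfrac{\mu}{2}\|y_{k+1}-y_k\|^2$, leaving the coefficient $\tfrac{\alpha^2}{2\mu}$—rather than $\tfrac{\alpha^2}{\mu}$—on $\|\nabla f(x_{k+1})\|^2$. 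The remaining bilinear gradient product is split via the polarization identity $-\alpha\beta\dual{\nabla f(x_{k+1}), \nabla f(x_k)} = \tfrac{\alpha\beta}{2}\|\nabla f(x_{k+1})-\nabla f(x_k)\|^2 - \tfrac{\alpha\beta}{2}\|\nabla f(x_{k+1})\|^2 - \tfrac{\alpha\beta}{2}\|\nabla f(x_k)\|^2$, reproducing the exact form in~\eqref{eq:onestep}. Finally, the leftover $-\tfrac{\alpha\mu(1+\alpha)}{2}\|x_{k+1}-y_{k+1}\|^2$ is discarded, and the residual $-\alpha D_f(x^{\star}, x_{k+1}) + \tfrac{\alpha\mu}{2}\|x_{k+1}-x^{\star}\|^2 \le 0$ is non-positive by $\mu$-strong convexity.

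The main obstacle is precisely this algebraic bookkeeping yielding the factor $\tfrac{1}{2}$ in $\tfrac{\alpha^2}{2\mu}$: had I simply dropped $-\tfrac{\mu}{2}\|y_{k+1}-y_k\|^2$ as non-positive, I would be left with $\tfrac{\alpha^2}{\mu}\|\nabla f(x_{k+1})\|^2$, too large to be absorbed by $\tfrac{\alpha\beta}{2}\|\nabla f(x_{k+1})\|^2$ under the parameter choice~\eqref{eq:HNAGalphabeta}. Tracking the $\dual{\nabla f(x_{k+1}),\, x_{k+1}-y_{k+1}}$ cross term through both the function and quadratic parts and observing the exact cancellation afforded by the implicit treatment of $y_{k+1}$ together with the evaluation of $\nabla f$ at the newly computed $x_{k+1}$ is the heart of the proof.
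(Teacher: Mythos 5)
Your proposal is correct in substance but takes a genuinely different route from the paper. The paper's proof is modular: it first writes $E(\bs z_{k+1}) - E(\bs z_k) = \langle\nabla E(\bs z_{k+1}),\bs z_{k+1}-\bs z_k\rangle - D_E(\bs z_k,\bs z_{k+1})$, then decomposes $\bs z_{k+1}-\bs z_k$ into the implicit Euler step $\alpha\mathcal G(\bs z_{k+1})$ plus a deviation, invokes the strong Lyapunov property of Lemma~\ref{lem:cont_strong} directly on the implicit part, and bounds the deviation cross term $\alpha\langle\nabla f(x_{k+1}),y_k-y_{k+1}\rangle$ by Young's inequality, letting the resulting $\tfrac{\mu}{2}\|y_k-y_{k+1}\|^2$ cancel against $-D_E$. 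You instead expand $(1+\alpha)E(\bs z_{k+1})-E(\bs z_k)$ fully at the level of $f$ and the quadratic term, substitute the $y$-update exactly into both $\langle\nabla f(x_{k+1}),y_k-y_{k+1}\rangle$ and $-\tfrac{\mu}{2}\|y_{k+1}-y_k\|^2$, and observe that the resulting $\alpha^2\langle\nabla f(x_{k+1}),x_{k+1}-y_{k+1}\rangle$ cross terms cancel identically — yielding the coefficient $\tfrac{\alpha^2}{2\mu}$ without any inequality, with an extra negative $-\tfrac{\alpha^2\mu}{2}\|x_{k+1}-y_{k+1}\|^2$ retained and then discarded. Your argument is thus more elementary and, pre-discard, marginally sharper; the paper's is less algebraically intense and keeps the strong Lyapunov property front and center, which is thematically central to the unified framework developed in Sections~3 and~4 (the analogous lemmas there follow the same template). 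Both arguments reduce the final gap to the same strong convexity bound $-\alpha D_f(x^\star,x_{k+1})+\tfrac{\alpha\mu}{2}\|x_{k+1}-x^\star\|^2\le 0$ and the same polarization identity on $-\alpha\beta\langle\nabla f(x_{k+1}),\nabla f(x_k)\rangle$.

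One small slip: the quadratic identity you quote, $\|a\|^2-\|b\|^2=2\langle b,a-b\rangle+\|a-b\|^2$, used with $a=y_{k+1}-x^\star$, $b=y_k-x^\star$, produces a remainder $+\|y_{k+1}-y_k\|^2$, not the claimed $-\tfrac{\mu}{2}\|y_{k+1}-y_k\|^2$. You want the companion form $\|a\|^2-\|b\|^2=2\langle a,a-b\rangle-\|a-b\|^2$, i.e.\ the expansion about the \emph{new} point $y_{k+1}$, which is exactly what the Bregman divergence of $E$ at $\bs z_{k+1}$ encodes. The rest of your derivation is consistent with this corrected form, so the slip is a mislabeling rather than a gap.
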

\begin{proof}
Expand the difference of $E$ by the definition of the Bregman divergence:
\begin{equation*}
E(\bs z_{k+1})-E(\bs z_k)
= \langle \nabla E(\bs z_{k+1}),\,\bs z_{k+1}-\bs z_k\rangle
- D_E(\bs z_k,\bs z_{k+1}),
\end{equation*}
where
\begin{equation}\label{eq:DEHNAG}
- D_E(\bs z_k,\bs z_{k+1})
= -D_f(x_k,x_{k+1})-\frac{\mu}{2}\|y_k-y_{k+1}\|^2 .
\end{equation}

Write the deviation from the implicit Euler step as
$$
\bs z_{k+1}-\bs z_k
= \alpha \mathcal G(\bs z_{k+1})
+ \alpha
\begin{pmatrix}
y_k-y_{k+1}+\beta(\nabla f(x_{k+1})-\nabla f(x_k))\\
0
\end{pmatrix}.
$$
\medskip
\noindent
\textbf{The implicit Euler term.} We apply the strong Lyapunov property \eqref{eq:A-HNAG} at $\bs z_{k+1}$ to the implicit Euler term
\begin{equation*}
\langle \nabla E(\bs z_{k+1}),\,\alpha \mathcal G(\bs z_{k+1}) \rangle \leq - \alpha E(\boldsymbol z_{k+1}) - \alpha\beta\|\nabla f(x_{k+1})\|^2-\frac{\alpha \mu}{2}\|x_{k+1} - y_{k+1}\|^2.
\end{equation*}

\medskip
\noindent
\textbf{Lagging of \(y\).}
%
For the term $\alpha\langle \nabla f(x_{k+1}),\,y_k-y_{k+1}\rangle$ arising from the lagging of $y$, the Cauchy--Schwarz and Young inequalities give
\begin{equation}\label{eq:firstcross}
\alpha\langle \nabla f(x_{k+1}),\,y_k-y_{k+1}\rangle
\le \frac{\alpha^2}{2\mu}\|\nabla f(x_{k+1})\|^2
+ \frac{\mu}{2}\|y_k-y_{k+1}\|^2,
\end{equation}
whose second term cancels with \eqref{eq:DEHNAG}. 

\medskip
\noindent
\textbf{Lagging of \(\nabla f(x)\).}
For the gradient cross term, arising from the lagging of $\nabla f(x)$,
the identity of squares yields
$$
\begin{aligned}
\alpha\beta\langle \nabla f(x_{k+1}),\,\nabla f(x_{k+1})-\nabla f(x_k)\rangle
&= -\frac{\alpha\beta}{2}\|\nabla f(x_k)\|^2
+ \frac{\alpha\beta}{2}\|\nabla f(x_{k+1})\|^2 \\
&\quad + \frac{\alpha\beta}{2}\|\nabla f(x_{k+1})-\nabla f(x_k)\|^2 .
\end{aligned}
$$
Combining and rearranging terms yields \eqref{eq:onestep}.
$\Box$ \end{proof}

We now select parameters to obtain linear convergence. 

\begin{theorem}[$(1-1/\sqrt{\kappa})$-linear convergence]\label{thm:linearconvHNAG-funcval-first}
Let $f\in\mathcal S_{\mu,L}$. For the iterates $(x_k, y_k)$ generated by~\eqref{eq:HNAG} with
$\alpha=\sqrt{\mu/L}$ and $\alpha\beta=1/L$,
and $E$ defined by \eqref{eq:lyapunov_strong_HNAG}, we have
\begin{equation}\label{eq:HNAGrate-first}
E(x_{k+1},y_{k+1})
\le \frac{1}{1+\sqrt{1/\kappa}}\,E(x_k,y_k),\quad k\ge 0.
\end{equation}
\end{theorem}
\begin{proof}
By co-coercivity \eqref{eq:cocov}, the choice $\alpha\beta=1/L$ implies
$$
\frac{\alpha\beta}{2}\|\nabla f(x_{k+1})-\nabla f(x_k)\|^2 - D_f(x_k,x_{k+1}) \le 0.
$$
With $\alpha^2=\mu/L$, the coefficient $\frac{\alpha^2}{2\mu}-\frac{\alpha\beta}{2}$ in front of $\|\nabla f(x_{k+1})\|^2$ vanishes.
Discarding all nonpositive terms in \eqref{eq:onestep} yields \eqref{eq:HNAGrate-first}. 
$\Box$ \end{proof}

\RV{Lemma~\ref{lem:disc_strong} is a leading-order estimate. The proof leaves two favorable terms unused: the negative term $-\frac{\alpha\mu}{2}\|x_{k+1}-y_{k+1}\|^2$, and an $O(\alpha^2)$ gain lost in the ``Lagging of $y$'' step by using inequality \eqref{eq:firstcross} instead of the exact $y$-update identity. Retaining them improves only higher-order constants in the step size and contraction factor; the leading-order rate is unchanged. This improvement is not visible numerically for ill-conditioned problems.}

More importantly, the rate can be improved by increasing the coercivity constant $c_L$ through the primal shift, or by enlarging the admissible step size $\alpha$ through the dual shift.

\section{A Family of HNAG-Type Flows and Schemes}
In this section, we present a family of HNAG-type flows and their discretizations. We show that the continuous-time coercivity can be improved by optimizing the time-rescaling parameters.

\subsection{Optimizing the coercivity of the HNAG-type flow}\label{sec:HNAGparametricflow}

Consider time-rescaling parameters $\tau_x$ and $\tau_y$ in the $x$- and $y$-updates:
$$
\begin{cases}
x' = \tau_x (y-x)-\RV{\beta\nabla f(x)},\\[1mm]
y' = \tau_y (x-y)-\dfrac1\mu \nabla f(x).
\end{cases}
$$
\RV{We define the corresponding vector field
$$
\mathcal G_p(\boldsymbol z)
:=
\left(
\tau_x(y-x)-\beta\nabla f(x),\,
\tau_y(x-y)-\frac1\mu\nabla f(x)
\right)^{\intercal}.
$$}
We pair this flow with the parametrized Lyapunov candidate
$$
\mathcal E_{p}(x,y)
=
D_{f_{-\theta\mu}}(x,x^\star)
+
\frac{\eta\mu}{2}\|y-x^\star\|^2,
$$
where the parameters $p=(\tau_x,\tau_y,\theta,\eta)$ are to be chosen.

\RV{We use
$$
\nabla f(x)=\nabla f_{-\theta\mu}(x)+\theta\mu(x-x^\star).
$$}
Following Lemma~\ref{lem:cont_strong}, the $x$-component contribution to $-\langle \nabla \mathcal E_p(\boldsymbol z),\mathcal G_p(\boldsymbol z)\rangle$ is
$$
\begin{aligned}
&-\left\langle \nabla f_{-\theta\mu}(x),\,\tau_x(y-x)-\beta\nabla f(x)\right\rangle
\\
={}&
-\tau_x\left\langle \nabla f_{-\theta\mu}(x),\,y-x^\star\right\rangle
+
(\tau_x+\beta\mu\theta)\left\langle \nabla f_{-\theta\mu}(x),\,x-x^\star\right\rangle
+
\beta\|\nabla f_{-\theta\mu}(x)\|^2 .
\end{aligned}
$$
The $y$-component contribution is
$$
\begin{aligned}
&-\eta\mu\left\langle y-x^\star,\,
\tau_y(x-y)-\tfrac1\mu\nabla f(x)
\right\rangle
\\
={}&
-\eta\mu\left\langle y-x^\star,\,
-\tau_y(y-x^\star)
+
(\tau_y-\theta)(x-x^\star)
-
\tfrac1\mu\nabla f_{-\theta\mu}(x)
\right\rangle
\\
={}&
\eta\mu\tau_y\|y-x^\star\|^2
-
\eta\mu(\tau_y-\theta)\langle y-x^\star,x-x^\star\rangle
+
\eta\left\langle y-x^\star,\nabla f_{-\theta\mu}(x)\right\rangle,
\end{aligned}
$$
with 
$$
2\langle y-x^\star,x-x^\star\rangle
=
\|y-x^\star\|^2
+
\|x-x^\star\|^2
-
\|x-y\|^2.
$$
We impose $\eta=\tau_x$ to cancel the mixed gradient term
$$
(\eta-\tau_x)\langle \nabla f_{-\theta\mu}(x),\,y-x^\star\rangle = 0.
$$
With this condition imposed, we have
\begin{equation}\label{eq:opt-taux-tauy-cancelled}
\begin{aligned}
&-\langle \nabla \mathcal E_{p}(\boldsymbol z),\mathcal G_p(\boldsymbol z)\rangle
={}
(\eta+\beta\mu\theta)\langle \nabla f_{-\theta\mu}(x),x-x^\star\rangle
+
\beta\|\nabla f_{-\theta\mu}(x)\|^2
\\
&\quad
+
\frac{\eta\mu}{2}
\Bigl(
(\tau_y+\theta)\|y-x^\star\|^2
-
(\tau_y-\theta)\|x-x^\star\|^2
+
(\tau_y-\theta)\|x-y\|^2
\Bigr).
\end{aligned}
\end{equation}
To obtain the strong Lyapunov property, we use two ways to control the negative square terms.

\paragraph{Using convexity.}
\RV{To drop the term $\|x-y\|^2$, we impose $\theta\le \tau_y$.} Since $f_{-\theta\mu}$ is $(1-\theta)\mu$-strongly convex for $\theta\leq 1$, we have
\begin{equation*}
\eta D_{f_{-\theta\mu}}(x^\star,x)
-
\frac{\eta(\tau_y-\theta)\mu}{2}\|x-x^\star\|^2
\ge
\frac{\eta(1-\tau_y)\mu}{2}\|x-x^\star\|^2 .
\end{equation*}
Since $f$ is $\mu$-strongly convex, the largest admissible shift is $\mu$, that is, $\theta\le 1$. To safely drop the quadratic terms involving $\|x-x^\star\|^2$, we assume $0\leq \tau_y\leq 1$. Thus
$$
0\leq \theta\le \tau_y\le 1.
$$
Under these conditions, we may take
$$
c_L=\min\{\eta+\beta\mu\theta,\tau_y+\theta\}\le 2.
$$
The largest possible value $c_L=2$ is obtained by taking
$$
\tau_x=\eta=2,\quad \tau_y=\theta=1.
$$
We use this setting in Section~\ref{sec:HNAG+}, which leads to HNAG$^+$.

\paragraph{Using symmetry.}
We write
$$
D_{f_{-\theta\mu}}(x^\star,x)
=
D_{f_{-\theta\mu}}(x,x^\star)+\Delta_f(x,x^\star),
$$
where the quadratic shift does not change the Bregman asymmetry. Imposing $\tau_y=\theta$ in \eqref{eq:opt-taux-tauy-cancelled} gives
\begin{equation*}
\begin{aligned}
-\langle \nabla \mathcal E_{p}(\boldsymbol z),\mathcal G_p(\boldsymbol z)\rangle
={}&
2(\eta+\beta\mu\theta)D_{f_{-\theta\mu}}(x,x^\star)
+
2\theta\frac{\eta\mu}{2}\|y-x^\star\|^2
\\
&\quad
+
(\eta+\beta\mu\theta)\Delta_f(x,x^\star)
+
\beta\|\nabla f_{-\theta\mu}(x)\|^2 .
\end{aligned}
\end{equation*}
\RV{Ignoring the sign-indefinite remainder $(\eta+\beta\mu\theta)\Delta_f(x,x^\star)$, which is treated rigorously in Section~\ref{sec:HNAG++} using the LAS assumption}, we may take
$$
c_L=\min\{2(\eta+\beta\mu\theta),2\theta\}\le 2.
$$
The largest possible value $c_L=2$ is obtained by taking
$$
\tau_x=\eta=1,\quad \tau_y=\theta=1.
$$
The remaining term $\Delta_f(x,x^\star)$ requires refined analysis. We use this setting in Section~\ref{sec:HNAG++}, which leads to HNAG$^{++}$.

\RV{The positive parameter $\beta$ improves the $x$-dissipation and helps control explicit-gradient lagging terms after discretization. It does not increase $c_L$, because the $y$-component remains the bottleneck. Moreover, due to the lagging of $\nabla f$, the discrete scheme requires $\alpha\beta$ to be controlled by smoothness; taking a large $\beta$ therefore forces a smaller step size $\alpha$.}

The strong Lyapunov property \eqref{eq:A-HNAG} of HNAG corresponds to
$$
\tau_x=\tau_y=\eta=1,\quad \theta=0,\quad c_L=1,
$$
which is not optimized for $c_L$.
 
\subsection{Discretization with parameters}
Following the above discussion, we fix \(\tau_y=\theta=1\), leave \(\tau_x=\tau\) as a parameter, and consider the family of discretizations
\begin{subequations}\label{eq:scheme1_scaled}
\begin{align}
\frac{x_{k+1}-x_k}{\alpha}
&=
\tau(y_k-x_{k+1})-\beta\nabla f(x_k),
\\
\frac{y_{k+1}-y_k}{\bar\alpha}
&=
x_{k+1}-y_{k+1}
-\frac1\mu\nabla f(x_{k+1}),
\end{align}
\end{subequations}
with positive parameters \((\tau,\bar\alpha,\alpha,\beta)\). 

Introduce one gradient descent step
$$
x_k^+:=x_k-\frac1L\nabla f(x_k).
$$
Eliminating \(y_k\) from \eqref{eq:scheme1_scaled} gives the two-step iteration
\begin{equation}\label{eq:twostep}
x_{k+1}=x_k+c_1(x_k^+-x_k)+c_2(x_k^+-x_{k-1}^+)+c_3(x_k-x_{k-1}),
\end{equation}
where
$$
c_1=\frac{\bar\alpha\alpha L\left(\beta+\frac{\tau}{\mu}\right)}{(1+\alpha\tau)(1+\bar\alpha)},\qquad
c_2=\frac{\alpha\beta L}{(1+\alpha\tau)(1+\bar\alpha)},\qquad
c_3=\frac{1-\alpha\beta L}{(1+\alpha\tau)(1+\bar\alpha)}.
$$
Under the scaling
$$
\bar \alpha \sim \alpha \sim \sqrt{\rho}, \qquad \alpha \beta = \frac{1}{L}, \qquad \alpha \beta \mu \sim \rho, \qquad \beta \mu \sim \sqrt{\rho} \sim \alpha.
$$
%
we have \(c_3=0\), \(c_1=O(1)\), and
$$
c_2=\frac1{(1+\alpha\tau)(1+\bar\alpha)}
=1-\alpha\tau-\bar\alpha+O(\rho)
=1-(1+\tau)\sqrt{\rho}+O(\rho).
$$

\begin{remark}
The term \(c_2(x_k^+-x_{k-1}^+)\) contains
$$
-\frac{c_2}{L}\bigl(\nabla f(x_k)-\nabla f(x_{k-1})\bigr)
\approx
-\frac{c_2}{L}\nabla^2 f(\xi)(x_k-x_{k-1}),
$$
which captures the change in the gradient. The Hessian form of this correction motivates the name HNAG.
\end{remark}


Conversely, some classical two-step methods, including NAG~\cite{nesterov2003introductory}, can be written in the HNAG-type form \eqref{eq:scheme1_scaled}. Indeed, NAG corresponds to
$$
x_k^+=x_k-\frac1L\nabla f(x_k),
\qquad
x_{k+1}=x_k^+
+\frac{\sqrt{\kappa}-1}{\sqrt{\kappa}+1}
\left(x_k^+-x_{k-1}^+\right),
$$
which is 
equivalent to \(\tau=1\), \(\alpha\beta=1/L\), $\alpha = \sqrt{\mu/L}$, and
\(\bar\alpha=\alpha/(1-\alpha)\). Thus, NAG is HNAG with a slightly larger step size in the $y$-update.


We list several accelerated methods discussed in this paper in Table~\ref{tab:hnag_equivalent_forms}.
\begin{table}[htbp]
\centering
\renewcommand{\arraystretch}{2.1}
\setlength{\tabcolsep}{10pt}
\begin{tabular}{@{}l c c@{}}
  \toprule
  Method & $(c_1,c_2,c_3)$ & $(\tau,\bar\alpha,\alpha,\alpha\beta)$ \\
  \midrule

  HNAG
  & $\left(\dfrac1{1+a},\dfrac1{(1+a)^2},0\right)$
  & $\left(1,a,a,\dfrac1L\right)$ \\[2mm]

  NAG
  & $\left(1,\dfrac{1-a}{1+a},0\right)$
  & $\left(1,\dfrac{a}{1-a},a,\dfrac1L\right)$ \\[2mm]

  TM / HNAG$^+$
  & $\left(\dfrac{2+a-a^2}{1+a},\dfrac{(1-a)^2}{1+a},0\right)$
  & $\left(2,\dfrac{a}{1-a},\dfrac{a}{1-a},\dfrac1L\right)$ \\[2mm]

  HNAG$^{++}$
  & $\left(\dfrac{2+\sqrt2\,a}{(1+\sqrt2\,a)^2},\dfrac1{(1+\sqrt2\,a)^2},0\right)$
  & $\left(1,\sqrt2\,a,\sqrt2\,a,\dfrac1L\right)$ \\[2mm]

  \bottomrule
\end{tabular}
\caption{Equivalent two-step \eqref{eq:twostep} and HNAG-type formulations \eqref{eq:scheme1_scaled} of accelerated methods, where \(a=\sqrt{\rho}=1/\sqrt{\kappa}\).}
\label{tab:hnag_equivalent_forms}
\end{table}

Rather than analyze the full parametric family \eqref{eq:scheme1_scaled}, which would be cumbersome because the parameters are underdetermined, we focus on two representative cases: $\tau=2$ in Section~\ref{sec:HNAG+} and $\tau=1$ in Section~\ref{sec:HNAG++}. We also assume $\bar\alpha=\alpha$ for simplicity and outline the possible generalization to $\bar\alpha\ne\alpha$.

\section{HNAG$^+$: Method and Convergence}\label{sec:HNAG+}

By the discussion in Section~\ref{sec:HNAGparametricflow}, we consider the rescaled HNAG flow
\begin{equation}\label{eq:flow_scaled}
\left\{
\begin{aligned}
x' &= 2(y-x)-\beta\,\nabla f(x),\\
y' &= x-y-\frac{1}{\mu}\nabla f(x),
\end{aligned}
\right.
\end{equation}
Define
$$
\mathcal G^+(\boldsymbol z)
:=
\left(
2(y-x)-\beta\nabla f(x),\,
x-y-\frac1\mu\nabla f(x)
\right)^{\intercal}.
$$
The factor \(2\) in the \(x\)-dynamics reflects the optimal coercivity.

\subsection{Discretization and Algorithm}
Discretizing~\eqref{eq:flow_scaled} using the same implicit--explicit splitting
as in HNAG gives
\begin{subequations}\label{eq:scheme2_scaled}
\begin{align}
\frac{x_{k+1} - x_k}{\alpha} &= 2(y_k - x_{k+1}) - \beta \nabla f(x_k), \label{eq:scheme2_scaled_x} \\
\frac{y_{k+1} - y_k}{\alpha} &= x_{k+1} - y_{k+1} - \frac{1}{\mu} \nabla f(x_{k+1}).
\end{align}
\end{subequations}

Similar to HNAG, the HNAG$^+$ scheme admits an implementation-friendly form in which the
gradient is evaluated only once per iteration.  
We still use $\alpha\beta=1/L$ but the step size $\alpha =\sqrt{\rho}/(1-\sqrt{\rho})$ is slightly larger.

%

\vskip -10pt
\begin{algorithm}
\caption{HNAG$^+$}
\label{alg:HNAGplus}
\linespread{1.3}\selectfont
\begin{algorithmic}[1]
\Require $x_0,v_0\in\mathbb R^d$, parameters $L>\mu>0$
\State $\alpha\gets\sqrt{\mu/L}/(1-\sqrt{\mu/L})$
\For{$k=0,1,2,\ldots$}
\State $v_{k+1}\gets \tfrac{1}{1+\alpha}\!\left(v_k+\alpha^2 x_k-\tfrac{1}{(\sqrt{L}-\sqrt{\mu})^2}\nabla f(x_k)\right)$
\State $x_{k+1}\gets \tfrac{1}{1+2\alpha}\!\left(x_k+2v_{k+1}-\tfrac{1}{L}\nabla f(x_k)\right)$
\EndFor
\State \textbf{Return:} $x_{k+1}$ and $y_{k} = v_{k+1}/\alpha$
\end{algorithmic}
\end{algorithm}


\subsection{Strong Lyapunov Property}
We define the Lyapunov energy
\begin{equation}\label{eq:lyaderivation_scaled}
\mathcal{E}^+(\boldsymbol z)=\mathcal{E}(x,y)
:= D_{f_{-\mu}}(x,x^{\star}) + \mu \|y - x^{\star}\|^2 .
\end{equation}
Compared with~\eqref{eq:lyapunov_strong_HNAG}, this definition incorporates a primal shift $f\mapsto f_{-\mu}$ and doubles the weight of the $y$--term. \RV{Strictly speaking, $\mathcal E^+$ need not be positive definite in $\boldsymbol z$: $\mathcal E^+(\boldsymbol z)=0$ implies $y=x^\star$, but may not imply $x=x^\star$, because the shifted function $f_{-\mu}$ is convex but not necessarily strongly convex. We therefore call $\mathcal E^+$ a Lyapunov energy rather than a Lyapunov function.}
We refine the strong Lyapunov property by incorporating the next-order terms.

\begin{lemma}\label{lem:energy_inequality_scaled}
For the Lyapunov energy $\mathcal E^+$ defined in \eqref{eq:lyaderivation_scaled} and the vector field $\mathcal G^+$ associated with the flow \eqref{eq:flow_scaled}, the following strong Lyapunov property holds:
\begin{equation}\label{eq:energy_identity_scaled}
   \begin{aligned}
&-\big\langle \nabla \mathcal E^+(\boldsymbol z),\,\mathcal G^+(\boldsymbol z)\big\rangle\\
={} &
(2+\beta\mu)\,\langle \nabla f_{-\mu}(x),\,x-x^{\star}\rangle
+2\mu\|y-x^{\star}\|^2
+\beta\|\nabla f_{-\mu}(x)\|^2\\
={} & 2\,\mathcal E^+(\bs z) +\beta\|\nabla f_{-\mu}(x)\|^2 + 
(2+\beta\mu)D_{f_{-\mu}} (x^{\star}, x) + \beta\mu D_{f_{-\mu}} (x, x^{\star}).
   \end{aligned}
\end{equation}
\end{lemma}

\begin{proof}
A direct calculation gives
$$
\begin{aligned}
-\big\langle\nabla \mathcal E^+(\boldsymbol z),\mathcal G^+(\boldsymbol z)\big\rangle
&= - \left\langle
\begin{pmatrix}
\nabla f_{-\mu}(x)\\[2pt]
2\mu (y-x^{\star})
\end{pmatrix}
,
\begin{pmatrix}
2(y-x)-\beta\nabla f(x)\\[2pt]
x-y-\tfrac{1}{\mu}\nabla f(x)
\end{pmatrix} \right\rangle\\[6pt]
&=\left\langle
\begin{pmatrix}
\nabla f_{-\mu}(x)\\[2pt]
2\mu (y-x^{\star})
\end{pmatrix}
,
\begin{pmatrix}
2(x-y)
+\beta\nabla f_{-\mu}(x)+\beta\mu(x-x^{\star})\\[2pt]
(y-x^{\star})+\tfrac{1}{\mu}\nabla f_{-\mu}(x)
\end{pmatrix} \right\rangle \\[6pt]
&=
(2+\beta\mu)\,\langle \nabla f_{-\mu}(x),\,x-x^{\star}\rangle
+2\mu\|y-x^{\star}\|^2
+\beta\|\nabla f_{-\mu}(x)\|^2 .
\end{aligned}
$$
Since $\nabla f_{-\mu}(x^{\star})=0$, the symmetrized Bregman identity yields
$$
\begin{aligned}
\langle \nabla f_{-\mu}(x),\,x-x^{\star}\rangle
&{}=
D_{f_{-\mu}}(x,x^{\star})+D_{f_{-\mu}}(x^{\star},x),
\end{aligned}
$$
Combining the terms gives \eqref{eq:energy_identity_scaled}.
$\Box$ \end{proof}

Lemma~\ref{lem:energy_inequality_scaled} shows that the scaled HNAG$^+$ flow satisfies a strong Lyapunov property with coercivity constant \(c_L=2\) \RV{as predicted in Section~\ref{sec:HNAGparametricflow}}. 

\subsection{Convergence}

We establish the linear convergence of the HNAG$^+$ scheme. The proof follows the same framework as in Lemma~\ref{lem:disc_strong}, with changes due to the larger coercivity constant \(c_L=2\) and the shifted Lyapunov functional. To allow a larger step size, we also retain the next-order terms in the estimate.

\begin{lemma}\label{lm:decay1-HNAGplus}
Let $\boldsymbol z_k=(x_k,y_k)$ be the iterates generated by the
HNAG$^+$~\eqref{eq:scheme2_scaled}.
Then the following one-step inequality holds:
\begin{equation}\label{eq:HNAG+onestep}
\begin{aligned}
&\mathcal{E}^+(\boldsymbol z_{k+1}) - \mathcal{E}^+(\boldsymbol z_k)\\
\;\le\;& - 2\alpha D_{f_{-\mu}}(x_{k+1},x^{\star}) - (2\alpha+\alpha^2)\mu\|y_{k+1}-x^{\star}\|^2 \\
&{}+\Bigl(\frac{\alpha^2}{\mu}
-\frac{\alpha\beta}{2} - \frac{\alpha(2+\beta\mu)}{2(L-\mu)} \Bigr)
\|\nabla f_{-\mu}(x_{k+1})\|^2
-\frac{\alpha\beta}{2}\frac{L}{L-\mu} \|\nabla f_{-\mu}(x_k)\|^2 \\[4pt]
&
+\frac{\alpha\beta}{2}
\|\nabla f_{-\mu}(x_{k+1})-\nabla f_{-\mu}(x_k)\|^2
-(1-\alpha\beta\mu)D_{f_{-\mu}}(x_k,x_{k+1}).
\end{aligned}
\end{equation}
\end{lemma}
\begin{proof}
Using the Bregman expansion,
\begin{equation}\label{eq:DE_HNAGplus}
\mathcal E^+(\bs z_{k+1})-\mathcal E^+(\bs z_k)
=
\big\langle \nabla \mathcal E^+(\bs z_{k+1}),\,\bs z_{k+1}-\bs z_k\big\rangle
-
D_{\mathcal E^+}(\bs z_k,\bs z_{k+1}).
\end{equation}
Let $g_k:=\nabla f_{-\mu}(x_k)$.
Write the update as a correction to implicit Euler scheme
$$
\bs z_{k+1}-\bs z_k
=
\alpha \mathcal G^+(\bs z_{k+1})
+
\alpha
\binom{
2(y_k-y_{k+1})
+\beta(g_{k+1}-g_k)
+\beta\mu(x_{k+1}-x_k)
}{0}.
$$
\medskip
\noindent
\textbf{The implicit Euler term.} The implicit Euler part is controlled by Lemma~\ref{lem:energy_inequality_scaled}:
\begin{equation}\label{eq:IE-HNAGplus}
\begin{aligned}
\alpha\big\langle \nabla \mathcal E^+(\bs z_{k+1}),\,\mathcal G^+(\bs z_{k+1})\big\rangle
={}&
-2\alpha\,\mathcal E^+(\bs z_{k+1}) - \alpha\beta \|g_{k+1}\|^2\\
&
-\alpha\beta\mu D_{f_{-\mu}}(x_{k+1},x^\star) - \alpha(2+\beta\mu)D_{f_{-\mu}} (x^{\star}, x_{k+1}).
\end{aligned}
\end{equation}

It remains to estimate the three correction terms.

\medskip
\noindent
\textbf{The \(\mu\)-shift term.}
By the three-point Bregman identity~\eqref{eq:Bregmanidentity},
$$
\begin{aligned}
\alpha\beta\mu\langle g_{k+1},x_{k+1}-x_k\rangle
={}&
\alpha\beta\mu\Bigl(
D_{f_{-\mu}}(x_{k+1},x^\star)
+
D_{f_{-\mu}}(x_k,x_{k+1})
-
D_{f_{-\mu}}(x_k,x^\star)
\Bigr).
\end{aligned}
$$
The first term cancels with $-\alpha\beta\mu D_{f_{-\mu}}(x_{k+1},x^\star)$ in \eqref{eq:IE-HNAGplus}. The second term is absorbed into
$-D_{\mathcal E^+}(\bs z_k,\bs z_{k+1})$
to give
$$
-(1-\alpha\beta\mu)D_{f_{-\mu}}(x_k,x_{k+1}).
$$
The last term is nonpositive and may be relaxed using
\begin{equation}\label{eq:extragk}
-\alpha\beta\mu D_{f_{-\mu}}(x_k,x^\star)
\le
-\frac{\alpha\beta\mu}{2(L-\mu)}\|g_k\|^2,
\end{equation}
by the \((L-\mu)\)-smoothness of \(f_{-\mu}\).

\medskip
\noindent
\textbf{Lagging of \(y\).}
Using the \(y\)-update,
$$
\frac{y_k-y_{k+1}}{\alpha}-\frac1\mu g_{k+1}=y_{k+1}-x^\star,
$$
we square and rescale both sides to get
$$
2\alpha\langle g_{k+1},y_k-y_{k+1}\rangle
=
\mu\|y_k-y_{k+1}\|^2
+\frac{\alpha^2}{\mu}\|g_{k+1}\|^2
-\alpha^2\mu\|y_{k+1}-x^\star\|^2 .
$$
The term $\mu\|y_k-y_{k+1}\|^2$ is canceled by $-D_{\mathcal E^+}(\bs z_k,\bs z_{k+1})$.
The last negative term is merged into $\|y_{k+1}-x^\star\|^2$ in \eqref{eq:IE-HNAGplus}.

\medskip
\noindent
\textbf{Lagging of the gradient.}
Using the identity of squares,
$$
\begin{aligned}
\alpha\beta\langle g_{k+1},g_{k+1}-g_{k}\rangle
={}&
-\frac{\alpha\beta}{2}\|g_k\|^2
+\frac{\alpha\beta}{2}\|g_{k+1}\|^2
+\frac{\alpha\beta}{2}\|g_{k+1}-g_k\|^2.
\end{aligned}
$$
The negative term $-\frac{\alpha\beta}{2}\|g_k\|^2$ is merged with the right hand side of \eqref{eq:extragk} to update the coefficient $- \frac{\alpha\beta}{2} - \frac{\alpha\beta\mu}{2(L-\mu)} = - \frac{\alpha\beta}{2} \frac{L}{L-\mu}$ in front of $\|g_k\|^2$.

The positive term $
\frac{\alpha\beta}{2}\|g_{k+1}\|^2$
cancels half of $-\alpha\beta\|g_{k+1}\|^2$ in \eqref{eq:IE-HNAGplus}. The negative term $- \alpha(2+\beta\mu)D_{f_{-\mu}} (x^{\star}, x_{k+1})$ in \eqref{eq:IE-HNAGplus} will contribute more negative $\|g_{k+1}\|^2$ by co-coercivity and leads the coefficient of $\|g_{k+1}\|^2$ in \eqref{eq:HNAG+onestep}. 

\medskip

Combining \eqref{eq:DE_HNAGplus}, \eqref{eq:IE-HNAGplus}, and the three estimates above yields the desired one-step estimate.
$\Box$\end{proof}

We now select parameters to obtain linear convergence.

\begin{theorem}[$(1-2/\sqrt{\kappa})$-linear convergence]\label{thm:quadconv-HNAGplus}
Let $f\in\mathcal S_{\mu,L}$ with $L>\mu>0$.
For the iterates $(x_k, y_k)$ generated by HNAG$^+$~\eqref{eq:scheme2_scaled} with parameters
$$
\alpha\beta=\frac{1}{L},
\qquad
\alpha= \frac{1/\sqrt{\kappa}}{1-1/\sqrt{\kappa}} = \frac{1}{\sqrt{\kappa}-1},
$$
and the Lyapunov energy $\mathcal E^+$ defined by \eqref{eq:lyaderivation_scaled}, we have
$$
\begin{aligned}
&\mathcal{E}^+(x_{k+1},y_{k+1})
-\frac{1}{2(L-\mu)}\|\nabla f_{-\mu}(x_{k+1})\|^2\\
\le{}&
\frac{\sqrt{\kappa}-1}{\sqrt{\kappa}+1} 
\left(
\mathcal{E}^+(x_k,y_k)
-\frac{1}{2(L-\mu)}\|\nabla f_{-\mu}(x_k)\|^2
\right).
\end{aligned}
$$
\end{theorem}
\begin{proof}
Since $f_{-\mu}$ is $(L-\mu)$-smooth, when $\alpha\beta=1/L$, we have
$$
\frac{\alpha\beta}{2}\|\nabla f_{-\mu}(x_{k+1})-\nabla f_{-\mu}(x_k)\|^2
\le
(1-\alpha\beta\mu)D_{f_{-\mu}}(x_k,x_{k+1}).
$$
By Lemma~\ref{lm:decay1-HNAGplus}, we have
\begin{equation*}
\begin{aligned}
(1+2\alpha)\mathcal{E}^+(\boldsymbol z_{k+1})  
\le{}\;&
\mathcal{E}^+(\boldsymbol z_k)
-\frac{1}{2(L-\mu)}
\|\nabla f_{-\mu}(x_k)\|^2
\\
&\quad
+\left(
\frac{\alpha^2}{\mu}
-\frac{\alpha\beta}{2}
-\frac{\alpha(2+\beta\mu)}{2(L-\mu)}
\right)
\|\nabla f_{-\mu}(x_{k+1})\|^2.
\end{aligned}
\end{equation*}
Define the dual-shifted energy
$$
\widetilde{\mathcal E}^+_k
:=
\mathcal E^+(x_k,y_k)-\frac{1}{2(L-\mu)}\|\nabla f_{-\mu}(x_k)\|^2.
$$
\RV{Since $f_{-\mu}$ is convex and $(L-\mu)$-smooth,
$\widetilde{\mathcal E}^+_k
\geq
\mu\|y_k-x^\star\|^2
\geq0.$}
A sufficient condition for
$$
(1+2\alpha)\widetilde{\mathcal E}^+_{k+1}\le \widetilde{\mathcal E}^+_k
$$
is that the coefficient of $\|\nabla f_{-\mu}(x_{k+1})\|^2$ satisfies

\begin{equation}\label{eq:key}
\frac{\alpha^2}{\mu}
-\frac{\alpha\beta}{2}
-\frac{\alpha(2+\beta\mu)}{2(L-\mu)}
\le
\frac{1+2\alpha}{2(L-\mu)}.
\end{equation}

Solving \eqref{eq:key} with $\alpha\beta=1/L$ shows that the largest admissible step size is
$$
\alpha=\frac{1/\sqrt{\kappa}}{1-1/\sqrt{\kappa}}=\frac{1}{\sqrt{\kappa}-1}.
$$
Substituting into the rate $1/(1+2\alpha)$ yields the rate $(\sqrt{\kappa}-1)/(\sqrt{\kappa}+1)$.
$\Box$\end{proof}

\begin{remark}
After rewriting the method in the two-step form \eqref{eq:twostep}, one can show that the Triple Momentum method~\cite{van2017fastest} is equivalent to HNAG$^+$, Algorithm~\ref{alg:HNAGplus}, for a particular choice of parameters. The design principle and the proof here are, however, different from the original TM analysis.

We next show how a modified discretization can further increase the coercivity in the shifted Lyapunov estimate. \RV{The $y$-update is kept unchanged}. Consider \RV{a modified} HNAG$^+$ $x$-update
\begin{equation}\label{eq:scheme2_scaled_perturbed}
\frac{x_{k+1}-x_k}{\alpha}
=
2(y_k-x_{k+1})-\beta\nabla f(x_k)-\beta\mu(x_{k+1}-x_k).
\end{equation}
Moving the perturbation term in \eqref{eq:scheme2_scaled_perturbed} to the left-hand side yields the standard HNAG$^+$ $x$-update \eqref{eq:scheme2_scaled_x} with the reduced effective step size $\tilde\alpha = \alpha/(1+\alpha\beta\mu) < \alpha$.
Let $g_k:=\nabla f_{-\mu}(x_k)$. Then
$$
-\beta\nabla f(x_k)-\beta\mu(x_{k+1}-x_k)
=
-\beta\nabla f(x_{k+1})+\beta(g_{k+1}-g_k).
$$
\RV{Consequently, the correction to the implicit Euler scheme becomes
$$
\bs z_{k+1}-\bs z_k
=
\alpha \mathcal G^+(\bs z_{k+1})
+
\alpha
\binom{
2(y_k-y_{k+1})
+\beta(g_{k+1}-g_k)
}{0}.
$$
Thus the perturbation removes the $\mu$-shift lagging term $\alpha\beta\mu\langle g_{k+1},x_{k+1}-x_k\rangle$ in the proof of Lemma~\ref{lm:decay1-HNAGplus}.}

\RV{In the corresponding refined estimate, the coercivity coefficient becomes
$$
c_L=\min\{2+\beta\mu,\,2+\alpha\}.
$$
Here $2+\beta\mu$ comes from the coefficient of $\langle \nabla f_{-\mu}(x),x-x^\star\rangle$ in \eqref{eq:energy_identity_scaled}, while $2+\alpha$ comes from the term $(2+\alpha)\mu\|y_{k+1}-x^\star\|^2$ in \eqref{eq:HNAG+onestep}. This yields a higher-order improvement in the admissible step size and contraction factor. The leading-order rate is, however, unchanged.}

This refinement reflects a design principle different from TM: the perturbation term $-\beta\mu(x_{k+1}-x_k)$ is induced by the $\mu$-shift in the Lyapunov function, and this shift strengthens the coercivity. Since this is only a higher-order improvement, we keep the simpler HNAG$^+$ scheme in Algorithm~\ref{alg:HNAGplus}.
\end{remark}

\begin{remark}
The estimate does not directly imply convergence of \(\|x_k-x^\star\|^2\), since the primal shift \(f\mapsto f_{-\mu}\) removes strong convexity in the \(x\)-component. In contrast, the sequence \(\{y_k\}\) satisfies
$$
\mu \|y_k-x^\star\|^2
\le
\widetilde{\mathcal E}^+_0
\left(\frac{\sqrt{\kappa}-1}{\sqrt{\kappa}+1}\right)^k,
$$
and therefore converges strongly to \(x^\star\). This is consistent with accelerated methods such as NAG, TM, and C2M, where the sequence with direct norm contraction is not the sequence at which the gradient is evaluated.
\end{remark}

\section{HNAG$^{++}$: A Larger-Step-Size Variant of HNAG}\label{sec:HNAG++}
In this section, we show that the larger step size $\alpha=\sqrt{2\mu/L}$ can be used. This step size is larger than the HNAG step size by a factor $\sqrt{2}\approx1.4$. \RV{Under the LAS condition, the resulting method, referred to as HNAG$^{++}$, attains the asymptotic rate $1-2\sqrt{2/\kappa}$.} 

\subsection{Algorithm}
HNAG$^{++}$ is defined as the original HNAG iteration~\eqref{eq:HNAG} executed with an enlarged step size. The algorithmic structure is unchanged; only the admissible step size (and hence the effective weights) is changed; see Algorithm~\ref{alg:HNAGpp}.

\begin{algorithm}
\caption{HNAG$^{++}$}
\label{alg:HNAGpp}
\linespread{1.3}\selectfont
\begin{algorithmic}[1]
\Require $x_0,v_0\in\mathbb R^d$, parameters $L\ge \mu>0$
\State $\alpha \gets \sqrt{2\mu/L}$
\For{$k=0,1,2,\ldots$}
\State $v_{k+1}\gets \tfrac{1}{1+\alpha}
\bigl(v_k+\alpha^2 x_k-\tfrac{2}{L}\nabla f(x_k)\bigr)$
\State $x_{k+1}\gets \tfrac{1}{1+\alpha}
\bigl(x_k+v_{k+1}-\tfrac{1}{L}\nabla f(x_k)\bigr)$
\EndFor
\State \textbf{Return:} $x_{k+1}$ and $y_{k} = v_{k+1}/\alpha$
\end{algorithmic}
\end{algorithm}


\subsection{Suboptimal linear rate}
We apply a shift in the dual space and obtain the following convergence result. A related result was obtained in~\cite[Theorem~8.1]{ChenLuo2021unified}, and an independent derivation was presented in~\cite{park2023factor_sqrt2_acceleration}.


\begin{theorem}[\((1-\sqrt{2/\kappa})\)-linear convergence]\label{thm:linearconvHNAG-funcval}
Let \(f\in\mathcal S_{\mu,L}\), and let \(E\) be defined by \eqref{eq:lyapunov_strong_HNAG}. For the iterates \((x_k,y_k)\) generated by the HNAG scheme~\eqref{eq:HNAG} with
$$
\alpha=\sqrt{2\mu/L},\quad \alpha\beta=1/L,
$$
or equivalently by Algorithm~\ref{alg:HNAGpp}, define
$$
\widetilde E_k:=E(x_k,y_k)-\frac1{2L}\|\nabla f(x_k)\|^2.
$$
Then, for all \(k\ge0\),
\begin{equation}\label{eq:HNAGrate}
\frac{\mu}{2}\|y_{k+1}-x^\star\|^2 \leq \widetilde E_{k+1}
\le
\frac1{1+\sqrt{2/\kappa}}\widetilde E_k
\le
\widetilde E_0\left(\frac1{1+\sqrt{2/\kappa}}\right)^{k+1}.
\end{equation}
Moreover, for all \(k\ge1\),
\begin{equation}\label{eq:HNAG-grad-bound}
\|\nabla f(x_k)\|^2
\le
\frac{2L(1+\alpha)}{\alpha}\,
\widetilde E_0
\left(\frac1{1+\alpha}\right)^k,
\end{equation}
and
\begin{equation}\label{eq:HNAG-strong-components}
\|x_k-x^\star\|^2
\le
\frac{2(1+2\alpha)}{\alpha\mu}\,
\widetilde E_0
\left(\frac1{1+\alpha}\right)^k .
\end{equation}
\end{theorem}

\begin{proof}
With \(\alpha\beta=1/L\) and \(\alpha^2=2\mu/L\), the one-step estimate \eqref{eq:onestep} becomes
$$
\begin{aligned}
(1+\alpha)E(\bs z_{k+1})
\le{}&
E(\bs z_k)
+\frac1{2L}\|\nabla f(x_{k+1})\|^2
-\frac1{2L}\|\nabla f(x_k)\|^2\\
&\  -\frac{\alpha\mu}{2}\|x_{k+1}-y_{k+1}\|^2 .
\end{aligned}
$$
Equivalently,
\begin{equation}\label{eq:one-step-energy-E}
(1+\alpha)\widetilde E_{k+1}
\le
\widetilde E_k
-\frac{\alpha\mu}{2}\|x_{k+1}-y_{k+1}\|^2
-\frac{\alpha}{2L}\|\nabla f(x_{k+1})\|^2 .
\end{equation}
Dropping the negative terms gives \eqref{eq:HNAGrate}.

Next, moving the negative terms in \eqref{eq:one-step-energy-E} to the left gives
$$
(1+\alpha)\widetilde E_{k+1}
+\frac{\alpha\mu}{2}\|x_{k+1}-y_{k+1}\|^2
+\frac{\alpha}{2L}\|\nabla f(x_{k+1})\|^2
\le
\widetilde E_k .
$$
Shifting the index, we get, for \(k\ge1\),
$$
\frac{\alpha}{2L}\|\nabla f(x_k)\|^2
\le
\widetilde E_{k-1}
\le
(1+\alpha)\widetilde E_0\left(\frac1{1+\alpha}\right)^k .
$$
This proves \eqref{eq:HNAG-grad-bound}. 
We use the $\mu$-strong convexity of $f$ to conclude 
$$
\frac{\mu}{2}\|x_k-x^{\star}\|^2 \leq \widetilde E_k + \frac1{2L}\|\nabla f(x_k)\|^2\leq
\frac{(1+2\alpha)}{\alpha}\,
\widetilde E_0
\left(\frac1{1+\alpha}\right)^k .
$$
This gives \eqref{eq:HNAG-strong-components}.
$\Box$\end{proof}


We next show that the coercivity can be improved by using the symmetry of Bregman divergence. 

\subsection{Quadratic convex functions}
To motivate, we start from quadratic convex functions. Assume that $f\in\mathcal S_{\mu,L}$ and that $f$ is quadratic. 
Consider the Lyapunov function
$$
\mathcal{E}(\boldsymbol z)=\mathcal{E}(x,y)
:= D_{f_{-\mu}}(x,x^{\star})+\frac{\mu}{2}\|y-x^{\star}\|^2.
$$
For quadratic functions, the Bregman divergence is symmetric. Consequently,
$$
\langle \nabla f_{-\mu}(x),\,x-x^{\star}\rangle
= D_{f_{-\mu}}(x,x^{\star})+D_{f_{-\mu}}(x^{\star},x)
= 2D_{f_{-\mu}}(x,x^{\star}).
$$
The associated HNAG flow \eqref{eq:flow} satisfies a strong Lyapunov property with coercivity constant $c_L=2$:
$$
-\big\langle \nabla \mathcal E(\boldsymbol z),\,\mathcal G(\boldsymbol z)\big\rangle
=
2\,\mathcal E(\boldsymbol z)
+\beta\|\nabla f_{-\mu}(x)\|^2
+\beta\mu\langle \nabla f_{-\mu}(x),\,x-x^{\star}\rangle .
$$
With this improved coercivity, the HNAG$^{++}$ iteration admits an accelerated linear convergence rate $(1+2\sqrt{2/\kappa})^{-1}$ for quadratic functions matching that of OGM-q~\cite{kim2018adaptive_restart_OGM}. This follows as a special case of the shifted Lyapunov estimate in Proposition~\ref{prop:full-shift}.

%

\subsection{Partial shift}
Since $f$ is $\mu$-strongly convex, for any $\hat{\mu}\leq \mu$, the shifted function
$$
f_{-\hat{\mu}}(x)
:= f(x) - \frac{\hat{\mu}}{2}\|x - x^{\star}\|^2
$$
is convex and satisfies $\nabla f_{-\hat{\mu}}(x^{\star}) = 0$. 
We define the Lyapunov function
\begin{equation}\label{eq:lya-muhat}
\mathcal{E}(\bs z; \hat{\mu})
:= D_{f_{-\hat{\mu}}}(x, x^{\star})
+ \tfrac{\mu}{2}\|y - x^{\star}\|^2.
\end{equation}
{Notice that the partial shift \(\hat\mu\) is used only in \(f\), while the \(y\)-component still uses the fixed coefficient \(\mu\).} 

We next state a refined strong Lyapunov property. {The key idea is to sacrifice part of the coercivity in order to control the asymmetry term \(\Delta_f(x,x^\star)\)}. 

\begin{lemma}\label{lem:energy_identity}
Let $0\leq\hat\mu\leq\mu$ and set
$\delta:=(\mu-\hat\mu)/\mu\in[0,1]$.
For the Lyapunov function $\mathcal E(\bs z;\hat\mu)$ defined in \eqref{eq:lya-muhat} and the vector field $\mathcal G$ defined in \eqref{eq:G}, we have
\begin{equation}\label{eq:energy_identity}
\begin{aligned}
-\big\langle \nabla \mathcal E(\bs z;\hat\mu),\mathcal G(\bs z)\big\rangle
={}&
\langle \nabla f_{-\hat\mu}(x),x-x^\star\rangle
+\mu\|y-x^\star\|^2
-\delta\mu\langle y-x^\star,x-x^\star\rangle
\\
&\quad
+\beta\|\nabla f_{-\hat\mu}(x)\|^2
+\beta\hat\mu\langle \nabla f_{-\hat\mu}(x),x-x^\star\rangle
\\
\geq{}&
\left(2-\sqrt{\delta}-b\right)\mathcal E(\bs z;\hat\mu)
+\RV{bD_{f_{-\hat\mu}}(x,x^\star)}+\beta\|\nabla f_{-\hat\mu}(x)\|^2\\
&\quad
+\beta\hat\mu\langle \nabla f_{-\hat\mu}(x),x-x^\star\rangle
+\left(1-\sqrt{\delta}\right)\Delta_f(x,x^\star),
\end{aligned}
\end{equation}
where $0\leq b<2-\sqrt{\delta}$.
\end{lemma}

\begin{proof}
A direct calculation gives
$$
\begin{aligned}
-\big\langle \nabla \mathcal E(\bs z;\hat\mu),\mathcal G(\bs z)\big\rangle
={}&
\left\langle
\begin{pmatrix}
\nabla f_{-\hat\mu}(x)\\[2pt]
\mu(y-x^\star)
\end{pmatrix},
\begin{pmatrix}
x-y+\beta\nabla f_{-\hat\mu}(x)+\beta\hat\mu(x-x^\star)\\
y-x^\star+\dfrac1\mu\nabla f_{-\hat\mu}(x)-\delta(x-x^\star)
\end{pmatrix}
\right\rangle
\\
={}&
\langle \nabla f_{-\hat\mu}(x),x-x^\star\rangle
+\mu\|y-x^\star\|^2
-\delta\mu\langle y-x^\star,x-x^\star\rangle
\\
&\quad
+\beta\|\nabla f_{-\hat\mu}(x)\|^2
+\beta\hat\mu\langle \nabla f_{-\hat\mu}(x),x-x^\star\rangle.
\end{aligned}
$$
This proves the identity. \RV{We keep the term $\beta\hat\mu\langle \nabla f_{-\hat\mu}(x),x-x^\star\rangle$ separate, since it controls the $\hat \mu$-shift term in Lemma~\ref{lm:decay1}.}

Since $f_{-\hat\mu}$ is $\delta\mu$-strongly convex,
\begin{equation}\label{eq:deltaconvexity}
D_{f_{-\hat\mu}}(x^\star,x)
\geq
\frac{\mu-\hat\mu}{2}\|x-x^\star\|^2
=
\frac{\delta\mu}{2}\|x-x^\star\|^2.
\end{equation}
Young's inequality and \eqref{eq:deltaconvexity} give
$$
\begin{aligned}
\delta\mu\langle y-x^\star,x-x^\star\rangle
&\leq
\sqrt{\delta}\,\frac{\mu}{2}\|y-x^\star\|^2
+
\frac{\delta^{3/2}\mu}{2}\|x-x^\star\|^2
\\
&\leq
\sqrt{\delta}\,\frac{\mu}{2}\|y-x^\star\|^2
+
\sqrt{\delta}\,D_{f_{-\hat\mu}}(x^\star,x).
\end{aligned}
$$
Therefore,
$$
\begin{aligned}
-\big\langle \nabla \mathcal E(\bs z;\hat\mu),\mathcal G(\bs z)\big\rangle
\geq{}&
\langle \nabla f_{-\hat\mu}(x),x-x^\star\rangle
-
\sqrt{\delta}\,D_{f_{-\hat\mu}}(x^\star,x)
+
\beta\|\nabla f_{-\hat\mu}(x)\|^2
\\
&\quad
+
\left(2-\sqrt{\delta}\right)\frac{\mu}{2}\|y-x^\star\|^2
+
\beta\hat\mu\langle \nabla f_{-\hat\mu}(x),x-x^\star\rangle.
\end{aligned}
$$
By the symmetrization identity \eqref{eq:symBre} and the invariance of the Bregman asymmetry under quadratic shifts,
$D_{f_{-\hat\mu}}(x^\star,x)
=
D_{f_{-\hat\mu}}(x,x^\star)
+
\Delta_f(x,x^\star).$
Hence
$$
\begin{aligned}
&\langle \nabla f_{-\hat\mu}(x),x-x^\star\rangle
-
\sqrt{\delta}\,D_{f_{-\hat\mu}}(x^\star,x)
\\
={}&
D_{f_{-\hat\mu}}(x,x^\star)
+
\left(1-\sqrt{\delta}\right)D_{f_{-\hat\mu}}(x^\star,x)
\\
={}&
\left(2-\sqrt{\delta}\right)D_{f_{-\hat\mu}}(x,x^\star)
+
\left(1-\sqrt{\delta}\right)\Delta_f(x,x^\star).
\end{aligned}
$$
Combining the first term with $\frac{\mu}{2}\|y-x^\star\|^2$ yields
$$
\begin{aligned}
-\big\langle \nabla \mathcal E(\bs z;\hat\mu),\mathcal G(\bs z)\big\rangle
\geq{}&
\left(2-\sqrt{\delta}\right)\mathcal E(\bs z;\hat\mu)
+
\beta\|\nabla f_{-\hat\mu}(x)\|^2
\\
&\quad
+
\beta\hat\mu\langle \nabla f_{-\hat\mu}(x),x-x^\star\rangle
+
\left(1-\sqrt{\delta}\right)\Delta_f(x,x^\star).
\end{aligned}
$$
Finally,
$$
\left(2-\sqrt{\delta}\right)\mathcal E(\bs z;\hat\mu)
=
\left(2-\sqrt{\delta}-b\right)\mathcal E(\bs z;\hat\mu)
+
b\mathcal E(\bs z;\hat\mu).
$$
Since $b\geq0$, 
using
$$
b\mathcal E(\bs z;\hat\mu)
\geq
bD_{f_{-\hat\mu}}(x,x^\star),
$$
we obtain \eqref{eq:energy_identity}.
\end{proof}

Using the refined strong Lyapunov property in Lemma~\ref{lem:energy_identity}, we obtain the following one-step inequality with perturbation terms. \RV{The proof follows Lemma~\ref{lm:decay1-HNAGplus}, retaining only the leading-order terms in $\alpha$.}

\begin{lemma}\label{lm:decay1}
Let $0\leq\hat\mu\leq\mu$, and let $(x_k,y_k)$ be generated by the HNAG scheme~\eqref{eq:HNAG} with
$\alpha=\sqrt{2\mu/L}$ and $\alpha\beta=1/L$.
Then, for all $k\geq0$,
\begin{equation}\label{eq:decay1}
\begin{aligned}
(1+c_L\alpha)\mathcal E(\bs z_{k+1};\hat\mu)
\leq{}&
\mathcal E(\bs z_k;\hat\mu)
+\frac1{2L}\|\nabla f_{-\hat\mu}(x_{k+1})\|^2
-\frac1{2L}\|\nabla f_{-\hat\mu}(x_k)\|^2
\\
&\quad
-\alpha\left[
d_\delta\Delta_f(x_{k+1},x^\star)
+\RV{bD_{f_{-\hat\mu}}(x_{k+1},x^\star)}
\right ],
\end{aligned}
\end{equation}
where
$$
b\geq0,\quad
\delta=\frac{\mu-\hat\mu}{\mu}\in[0,1],\quad
c_L=2-\sqrt{\delta}-b>0,\quad
d_\delta=1-\sqrt{\delta}.
$$
\end{lemma}

\begin{proof}
Using the Bregman expansion,
$$
\mathcal E(\bs z_{k+1};\hat\mu)-\mathcal E(\bs z_k;\hat\mu)
=
\big\langle \nabla \mathcal E(\bs z_{k+1};\hat\mu),\bs z_{k+1}-\bs z_k\big\rangle
-
D_{\mathcal E(\cdot;\hat\mu)}(\bs z_k,\bs z_{k+1}).
$$
Let $g_k:=\nabla f_{-\hat\mu}(x_k)$. \RV{Since $-\beta\nabla f(x_{k+1})$ is used in $\mathcal G(\bs z_{k+1})$ and
$$
\beta(g_{k+1}-g_k)+\beta\hat\mu(x_{k+1}-x_k)
=
\beta\bigl(\nabla f(x_{k+1})-\nabla f(x_k)\bigr),
$$}
the update can be written as
$$
\bs z_{k+1}-\bs z_k
=
\alpha\mathcal G(\bs z_{k+1})
+
\alpha
\binom{
(y_k-y_{k+1})+\beta(g_{k+1}-g_k)+\beta\hat\mu(x_{k+1}-x_k)
}{0}.
$$

The implicit Euler part is controlled by Lemma~\ref{lem:energy_identity}:
\begin{equation}\label{eq:IE-HNAGpp}
\begin{aligned}
&\alpha\big\langle \nabla \mathcal E(\bs z_{k+1};\hat\mu),\mathcal G(\bs z_{k+1})\big\rangle \\
\leq{}&
-c_L\alpha\mathcal E(\bs z_{k+1};\hat\mu)
-\alpha\beta\hat\mu\langle g_{k+1},x_{k+1}-x^\star\rangle
\\
&\quad
-\alpha\beta\|g_{k+1}\|^2
-\alpha\left [
d_\delta\Delta_f(x_{k+1},x^\star)
+\RV{bD_{f_{-\hat\mu}}(x_{k+1},x^\star)}
\right ].
\end{aligned}
\end{equation}

\medskip
\noindent
\textbf{The $\hat\mu$-shift term.}
By the three-point Bregman identity~\eqref{eq:Bregmanidentity},
$$
\begin{aligned}
&\alpha\beta\hat\mu\langle g_{k+1},x_{k+1}-x_k\rangle
\\
={}&
\alpha\beta\hat\mu\Bigl(
D_{f_{-\hat\mu}}(x_{k+1},x^\star)
+
D_{f_{-\hat\mu}}(x_k,x_{k+1})
-
D_{f_{-\hat\mu}}(x_k,x^\star)
\Bigr).
\end{aligned}
$$
The first term cancels the corresponding part of the shift term in \eqref{eq:IE-HNAGpp}, leaving a nonpositive term, which we may discard.
The second term combines with
$-D_{\mathcal E(\cdot;\hat\mu)}(\bs z_k,\bs z_{k+1})$ to give
$$
-(1-\alpha\beta\hat\mu)D_{f_{-\hat\mu}}(x_k,x_{k+1}).
$$
The last term is also nonpositive and may be discarded.

\medskip
\noindent
\textbf{Lagging of $y$.}
By Cauchy--Schwarz and Young's inequality,
$$
\alpha\langle g_{k+1},y_k-y_{k+1}\rangle
\leq
\frac{\alpha^2}{2\mu}\|g_{k+1}\|^2
+
\frac{\mu}{2}\|y_k-y_{k+1}\|^2.
$$
The second term cancels the $y$-component of
$-D_{\mathcal E(\cdot;\hat\mu)}(\bs z_k,\bs z_{k+1})$.

\medskip
\noindent
\textbf{Lagging of the gradient.}
The identity of squares gives
$$
\begin{aligned}
\alpha\beta\langle g_{k+1},g_{k+1}-g_k\rangle
={}&
-\frac{\alpha\beta}{2}\|g_k\|^2
+\frac{\alpha\beta}{2}\|g_{k+1}\|^2
+\frac{\alpha\beta}{2}\|g_{k+1}-g_k\|^2.
\end{aligned}
$$
Since $f_{-\hat\mu}$ is $(L-\hat\mu)$-smooth and $\alpha\beta=1/L$,
$$
\frac{\alpha\beta}{2}\|g_{k+1}-g_k\|^2
\leq
(1-\alpha\beta\hat\mu)D_{f_{-\hat\mu}}(x_k,x_{k+1}).
$$
Therefore,
$$
\begin{aligned}
&\alpha\beta\langle g_{k+1},g_{k+1}-g_k\rangle
-
(1-\alpha\beta\hat\mu)D_{f_{-\hat\mu}}(x_k,x_{k+1})
\\
\leq{}&
-\frac{\alpha\beta}{2}\|g_k\|^2
+
\frac{\alpha\beta}{2}\|g_{k+1}\|^2.
\end{aligned}
$$

Combining the above estimates gives
$$
\begin{aligned}
(1+c_L\alpha)\mathcal E(\bs z_{k+1};\hat\mu)
\leq{}&
\mathcal E(\bs z_k;\hat\mu)
+
\left(
\frac{\alpha^2}{2\mu}
-\frac{\alpha\beta}{2}
\right)\|g_{k+1}\|^2
-
\frac{\alpha\beta}{2}\|g_k\|^2
\\
&\quad
-\alpha bD_{f_{-\hat\mu}}(x_{k+1},x^\star)
-\alpha d_\delta\Delta_f(x_{k+1},x^\star).
\end{aligned}
$$
Finally, using $\alpha^2=2\mu/L$ and $\alpha\beta=1/L$, we have
the desired result \eqref{eq:decay1}. $\Box$
\end{proof}


{We record two limiting cases of Lemma~\ref{lm:decay1}. They clarify the role of the shift parameter \(\hat\mu=(1-\delta)\mu\).}

\paragraph{\bf Case 1: \(\delta=1\), no primal shift.} In this case, \(\hat\mu=0\) and \(f_{-\hat\mu}=f\). Since \(d_\delta=0\), there is no \(\Delta_f(x_{k+1},x^{\star})\) term. Taking \(b=0\), we have
$$
c_L=1,
$$
which yields the suboptimal rate \((1+\sqrt{2/\kappa})^{-1}\) in Theorem~\ref{thm:linearconvHNAG-funcval}.

\paragraph{\bf Case 2: \(\delta=0\), full primal shift.}
In this case, \(\hat\mu=\mu\), \(d_\delta=1\), and, taking \(b=0\), we have the full coercivity
$$
c_L=2.
$$
\begin{proposition}[Full primal shift]\label{prop:full-shift}
Assume $\delta=0$, so that $\hat\mu=\mu$, $d_\delta=1$, and $b=0$. Let
$$
\widetilde{\mathcal E}_k
:=
\mathcal E(\bs z_k;\mu)
-\frac1{2L}\|\nabla f_{-\mu}(x_k)\|^2.
$$
Then
$$
\widetilde{\mathcal E}_{k+1}
\le
\frac1{1+2\alpha}\widetilde{\mathcal E}_k
-\frac{\alpha}{1+2\alpha}\Delta_f(x_{k+1},x^\star).
$$
If $\Delta_f(x_j,x^\star)\geq0$ for all $j$, \RV{which holds when $f$ is quadratic}, then
\begin{equation}\label{eq:Delta=0}
\widetilde{\mathcal E}_k
\leq
\widetilde{\mathcal E}_0
\left(\frac1{1+2\sqrt{2/\kappa}}\right)^k.
\end{equation}
More generally, assume $\alpha\leq1$. If
$$
\Delta_f(x_j,x^\star)
\geq
-C_\Delta\left(\frac{1-\alpha}{1+2\alpha}\right)^j,
\qquad j\geq0,
$$
then
\begin{equation}\label{eq:highorder}
\widetilde{\mathcal E}_k
\leq
(\widetilde{\mathcal E}_0+C_\Delta)
\left(\frac1{1+2\sqrt{2/\kappa}}\right)^k.
\end{equation}
\end{proposition}

\begin{proof}
Lemma~\ref{lm:decay1} gives
$$
\begin{aligned}
(1+2\alpha)\mathcal E(\bs z_{k+1};\mu)
\leq{}&
\mathcal E(\bs z_k;\mu)
+\frac1{2L}\|\nabla f_{-\mu}(x_{k+1})\|^2
-\frac1{2L}\|\nabla f_{-\mu}(x_k)\|^2
\\
&\quad
-\alpha\Delta_f(x_{k+1},x^\star).
\end{aligned}
$$
Let $r=(1+2\alpha)^{-1}$. By the definition of $\widetilde{\mathcal E}_k$, subtracting $\frac{1+2\alpha}{2L}\|\nabla f_{-\mu}(x_{k+1})\|^2$ from both sides gives
$$
\begin{aligned}
(1+2\alpha)\widetilde{\mathcal E}_{k+1}
\leq{}&
\widetilde{\mathcal E}_k
-\frac{\alpha}{L}\|\nabla f_{-\mu}(x_{k+1})\|^2
-\alpha\Delta_f(x_{k+1},x^\star)
\\
\leq{}&
\widetilde{\mathcal E}_k
-\alpha\Delta_f(x_{k+1},x^\star).
\end{aligned}
$$
Therefore,
$$
\widetilde{\mathcal E}_{k+1}
\leq
r\widetilde{\mathcal E}_k
-\alpha r\Delta_f(x_{k+1},x^\star).
$$
Iterating yields
$$
\widetilde{\mathcal E}_k
\leq
r^k\widetilde{\mathcal E}_0
-\alpha r\sum_{j=1}^k r^{k-j}\Delta_f(x_j,x^\star).
$$
If $\Delta_f(x_j,x^\star)\geq0$, then the perturbation term is nonpositive, and \eqref{eq:Delta=0} follows.

For the second claim, use
$$
-\Delta_f(x_j,x^\star)
\leq
C_\Delta\left(\frac{1-\alpha}{1+2\alpha}\right)^j
=
C_\Delta \, r^j(1-\alpha)^j.
$$
Then
$$
\begin{aligned}
\widetilde{\mathcal E}_k
&\leq
r^k\widetilde{\mathcal E}_0
+
\alpha r C_\Delta
\sum_{j=1}^k r^{k-j}r^j(1-\alpha)^j
=
r^k\widetilde{\mathcal E}_0
+
\alpha r C_\Delta\, r^k
\sum_{j=1}^k(1-\alpha)^j.
\\
&\leq
r^k\widetilde{\mathcal E}_0
+
r(1-\alpha)C_\Delta \, r^k \leq
(\widetilde{\mathcal E}_0+C_\Delta)r^k.
\end{aligned}
$$
This proves \eqref{eq:highorder}. $\Box$
\end{proof}

Thus the full-shift argument yields the leading rate $1-2\sqrt{2/\kappa}$ when the Bregman asymmetry is nonnegative or decays faster than the target contraction factor. The sign of $\Delta_f(x_j,x^\star)$ may vary along the iteration, so the weighted sum $-\alpha r\sum_{j=1}^k r^{k-j}\Delta_f(x_j,x^\star)$ may also exhibit cancellation. Since its sign is hard to control, we introduce a new function class and bound its magnitude directly.

\subsection{\RV{A new function class}}\label{sec:LAS}

The full-shift estimate shows that the relevant quantity is the Bregman asymmetry relative to the Bregman divergence. This motivates the following definition.

\begin{definition}[Local asymptotic symmetry]
Let $f\in\mathcal S_{\mu,L}$ with minimizer $x^\star$. The function $f$ is said to be \emph{locally asymptotically symmetric at $x^\star$}, abbreviated \emph{LAS at $x^\star$}, if
$$
\lim_{x\to x^\star}
\RV{\frac{|\Delta_f(x,x^\star)|}{D_f(x,x^\star)}}
=
0.
$$
Define
$$
\omega(R)
:=
\sup_{0<\|x-x^\star\|\leq R}
\frac{|\Delta_f(x,x^\star)|}{D_f(x,x^\star)},
\qquad R>0.
$$
Then $\omega$ is monotone nondecreasing, and the LAS condition is equivalent to
$$
\lim_{R\to0^+}\omega(R)=0.
$$
\end{definition}

We define
$$
\mathcal S_{\mu,L}^{\mathrm{LAS}}(x^\star)
:=
\Bigl\{
f\in\mathcal S_{\mu,L}:
f\text{ is LAS at }x^\star
\Bigr\}.
$$

\RV{For $f\in\mathcal S_{\mu,L}$, using \eqref{eq:muLsquares}, we have
$$
\frac{2}{L}
\frac{|\Delta_f(x,x^\star)|}{\|x-x^\star\|^2}
\leq
\frac{|\Delta_f(x,x^\star)|}{D_f(x,x^\star)}
\leq
\frac{2}{\mu}
\frac{|\Delta_f(x,x^\star)|}{\|x-x^\star\|^2}.
$$
Thus the ratio-form LAS condition is equivalent to
$$
\frac{|\Delta_f(x,x^\star)|}{\|x-x^\star\|^2}
\to0
\qquad\text{as }x\to x^\star.
$$
The ratio form $|\Delta_f(x,x^\star)|/D_f(x,x^\star)$ is more natural for the Bregman geometry and simplifies the asymptotic analysis below.}

The LAS condition means that, near the minimizer, the Bregman asymmetry $\Delta_f(x,x^\star)$ is negligible relative to the local Bregman energy. It is weaker than $\mathcal C^2$ regularity, but is sufficient to treat the Bregman asymmetry as a higher-order perturbation.

The following proposition gives a practical sufficient condition for LAS in terms of directional second derivatives.

\begin{proposition}[Uniform directional $\mathcal C^2$ regularity implies LAS]
\label{prop:directional-C2-LAS}
Assume $f\in\mathcal S_{\mu,L}$. Suppose that, for some $R_0>0$, each ray function
$$
\phi_\theta(r):=f(x^\star+r\theta),
\qquad
\theta\in\mathbb S^{d-1} \RV{= \{x\in \mathbb R^d: \|x\| = 1\}},
$$
belongs to $\mathcal C^2(0,R_0)$, and that the directional second derivatives have vanishing uniform oscillation near the minimizer, namely,
$$
\omega_2(R)
:=
\sup_{\theta\in\mathbb S^{d-1}}
\sup_{0<s,t\leq R}
|\phi_\theta''(s)-\phi_\theta''(t)|
\to0
\qquad\text{as }R\to0^+.
$$
Then $f\in\mathcal S_{\mu,L}^{\mathrm{LAS}}(x^\star)$. More precisely, for $0<R\leq R_0$ and $\|x-x^\star\|\leq R$,
$$
|\Delta_f(x,x^\star)|
\leq
\frac12\omega_2(R)\|x-x^\star\|^2\leq \frac{\omega_2(R)}{\mu}D_f(x,x^\star).
$$
\end{proposition}

\begin{proof}
The assumption on $\omega_2$ implies that, for each $\theta\in\mathbb S^{d-1}$, $\phi_\theta''(r)$ has a limit as $r\to0^+$. We extend $\phi_\theta''$ continuously to $r=0$ by this limit.

Let $x=x^\star+r\theta$, where $r=\|x-x^\star\|$ and $\theta\in\mathbb S^{d-1}$. Since $x^\star$ is a minimizer,
$
\phi_\theta'(0)
=
0.
$
The $L$-smoothness of $f$ implies that $\phi_\theta'$ is Lipschitz. Therefore,
$$
\phi_\theta(r)-\phi_\theta(0)
=
\int_0^r(r-s)\phi_\theta''(s)\dd s,
\qquad
\phi_\theta'(r)
=
\int_0^r\phi_\theta''(s)\dd s.
$$
Hence
$$
\begin{aligned}
\Delta_f(x,x^\star)
&=
D_f(x^\star,x)-D_f(x,x^\star)
\\
&=
r\phi_\theta'(r)
-
2\bigl(\phi_\theta(r)-\phi_\theta(0)\bigr)
\\
&=
r^2\int_0^1(2t-1)\phi_\theta''(tr)\dd t.
\end{aligned}
$$
Since $\int_0^1(2t-1)\dd t=0$,
we may subtract $\phi_\theta''(0)$ to obtain
$$
\begin{aligned}
|\Delta_f(x,x^\star)|
&\leq
r^2\int_0^1|2t-1|
\bigl|\phi_\theta''(tr)-\phi_\theta''(0)\bigr|\dd t
\\
&\leq
r^2\omega_2(R)\int_0^1|2t-1|\dd t
=
\frac12\omega_2(R)r^2.
\end{aligned}
$$
Since $f$ is $\mu$-strongly convex,
$D_f(x,x^\star)\geq\frac{\mu}{2}r^2$.
Thus
$$
\frac{|\Delta_f(x,x^\star)|}{D_f(x,x^\star)}
\leq
\frac{\omega_2(R)}{\mu}
\to0
\qquad\text{as }R\to0^+.
$$
This proves the LAS condition. $\Box$
\end{proof}

In particular,
$$
\mathcal C^2\cap\mathcal S_{\mu,L}
\subset
\mathcal S_{\mu,L}^{\mathrm{LAS}}(x^\star).
$$
Indeed, if $\nabla^2 f$ is continuous near $x^\star$, then
$$
\phi_\theta''(r)
=
\left\langle
\nabla^2 f(x^\star+r\theta)\theta,\theta
\right\rangle.
$$
The uniform continuity of $\nabla^2 f$ on a compact neighborhood of $x^\star$ implies that $\omega_2(R)\to0$ as $R\to0^+$.

However, $\mathcal C^2$ regularity is not necessary. As indicated by Proposition~\ref{prop:directional-C2-LAS}, piecewise $\mathcal C^2$ functions may still belong to $\mathcal S_{\mu,L}^{\mathrm{LAS}}(x^\star)$. A simple example is
$$
f(x)=
\begin{cases}
\tfrac{L}{2}x^2, & x\geq0,\\[1mm]
\tfrac{\mu}{2}x^2, & x<0,
\end{cases}
\qquad 0<\mu<L.
$$
Then $f\in\mathcal C^1(\mathbb R)\cap\mathcal S_{\mu,L}$, while $f''$ is piecewise constant and discontinuous at $x^\star=0$. Since $f$ is quadratic on each side of $0$,
$\Delta_f(x,0)=0$ for all $x$ so the LAS condition holds trivially.

\RV{To obtain a nonzero Bregman asymmetry, we add a smooth perturbation that preserves the minimizer. Let}
\begin{equation}\label{eq:LASexample}
f_\epsilon(x)=f(x)+\epsilon\sin^2x,
\qquad
\epsilon=\frac{\mu}{4}.
\end{equation}
Then $f_\epsilon\in\mathcal S_{\mu/2,L+\mu/2}$ and $x^\star = \arg\min f_\epsilon = 0$.
For $h(x)=\epsilon\sin^2x$,
$$
\begin{aligned}
\Delta_h(x,0)
&=
-2\epsilon\sin^2x+\epsilon x\sin(2x)
=
-\frac{2\epsilon}{3}x^4+O(x^6).
\end{aligned}
$$
The piecewise quadratic part has zero Bregman asymmetry, and hence
$$
|\Delta_{f_\epsilon}(x,0)|
=
O(|x|^4), \quad 
D_{f_\epsilon}(x,0)
\geq
\frac{\mu}{4}|x|^2.
$$
Therefore,
$$
\frac{|\Delta_{f_\epsilon}(x,0)|}{D_{f_\epsilon}(x,0)}
=
O(|x|^2)
\to0
\qquad\text{as }x\to0.
$$
Thus the function \eqref{eq:LASexample} belongs to $\mathcal S_{\mu/2,L+\mu/2}^{\mathrm{LAS}}(0)$ and has a nonzero but asymptotically negligible Bregman asymmetry.

\subsection{A sequence of partial shifts}

Let \(\{\mu_k\}\) be a nondecreasing sequence with \(\mu_k\uparrow\mu\). Define
\begin{equation*}
\mathcal E(\bs z_k;\mu_k)
:=
D_{f_k}(x_k,x^\star)+\frac{\mu}{2}\|y_k-x^\star\|^2,
\qquad
f_k:=f_{-\mu_k}.
\end{equation*}
The parameter \(\mu_k\) is used only in the shift of \(f_k\); the quadratic term in \(y_k\) always uses the fixed coefficient \(\mu\). We write
$$
\delta_k:=\frac{\mu-\mu_k}{\mu},
\qquad
\Delta\delta_k:=\delta_k-\delta_{k+1}\ge0.
$$
Changing the shift also changes the gradient term. The resulting difference is a higher-order perturbation.

\begin{lemma}\label{lm:gradient_norm_ineq}
For all \(k\ge 0\),
$$
\|\nabla f_k(x_{k+1})\|^2
\le
\|\nabla f_{k+1}(x_{k+1})\|^2
+
L\Delta\delta_k\alpha^2
\langle \nabla f_k(x_{k+1}),x_{k+1}-x^\star\rangle.
$$
\end{lemma}
\begin{proof}
By definition,
$$
\nabla f_{k+1}(x_{k+1})
=
\nabla f_k(x_{k+1})-(\mu_{k+1}-\mu_k)(x_{k+1}-x^\star).
$$
Expanding the square and dropping the positive last term, we obtain
$$
\begin{aligned}
\|\nabla f_{k+1}(x_{k+1})\|^2
&=
\bigl\|\nabla f_k(x_{k+1})-(\mu_{k+1}-\mu_k)(x_{k+1}-x^\star)\bigr\|^2\\
&\ge
\|\nabla f_k(x_{k+1})\|^2
-
2(\mu_{k+1}-\mu_k)
\langle \nabla f_k(x_{k+1}),x_{k+1}-x^\star\rangle.
\end{aligned}
$$
Since $\mu_{k+1}-\mu_k=\Delta\delta_k\,\mu, 
\alpha^2=2\mu/L,$
we have
$$
2(\mu_{k+1}-\mu_k)=L\Delta\delta_k\alpha^2.
$$
This proves the claim.
$\Box$
\end{proof}

By Theorem~\ref{thm:linearconvHNAG-funcval},
$$
\|x_{k+1}-x^\star\|
\leq
C_1\left(1+\sqrt{2\rho}\right)^{-\frac{k+1}{2}},
\qquad
C_1:=
\left(
\frac{2(1+2\alpha)}{\alpha\mu}\widetilde E_0
\right)^{1/2}.
$$
If $f\in\mathcal S_{\mu,L}^{\rm LAS}(x^\star)$, then
\begin{equation}\label{eq:def-omegak}
\omega(\|x_{k+1}-x^\star\|)
\leq
\omega\left(
C_1\left(1+\sqrt{2\rho}\right)^{-\frac{k+1}{2}}
\right)
=:\omega_k.
\end{equation}
Since $\omega$ is nondecreasing and its argument decreases to zero, $\{\omega_k\}$ is nonincreasing and converges to zero.

The proof of the following one-step contraction has three steps. First, we apply Lemma~\ref{lm:decay1} with the partial shift $\mu_k$. Second, we use Lemma~\ref{lm:gradient_norm_ineq} to control the higher-order perturbation caused by changing the shifted gradient from $\nabla f_k$ to $\nabla f_{k+1}$. Third, we change the energy parameter from $\mu_k$ to $\mu_{k+1}$ and absorb the remaining perturbation using \eqref{eq:omega_condition}.

\begin{lemma}\label{lm:shifted_energy_recursion}
Assume $f\in\mathcal S_{\mu,L}^{\rm LAS}(x^\star)$. Let $(x_k,y_k)$ be generated by Algorithm~\ref{alg:HNAGpp}. Let $\{\delta_k\}\subset[0,1]$ be nonincreasing with $\delta_k\to0$, and let $\{b_k\}$ be nonnegative with $b_k\to0$. Set
$$
\mu_k=(1-\delta_k)\mu,\quad
f_k:=f_{-\mu_k},\quad
d_k:=1-\sqrt{\delta_k},\quad
\Delta\delta_k:=\delta_k-\delta_{k+1}\geq0,
$$
and define
$$
\widetilde{\mathcal E}(\bs z_k;\mu_k)
:=
\mathcal E(\bs z_k;\mu_k)
-\frac1{2L}\|\nabla f_k(x_k)\|^2\geq 0.
$$
Let $\omega_k$ be defined by \eqref{eq:def-omegak}, and set
$$
c_k:=2-\sqrt{\delta_k}-b_k\geq0,
\qquad
r_k:=
\left(1+c_k\alpha-\Delta\delta_k\alpha^2\right)^{-1}.
$$
Assume $r_k^{-1}\geq1$ and
\begin{equation}\label{eq:omega_condition}
\left|
d_k-\frac{\Delta\delta_k\alpha}{2}
\right|\omega_k
\leq
b_k\delta_k.
\end{equation}
Then
$$
\widetilde{\mathcal E}(\bs z_{k+1};\mu_{k+1})
\leq
r_k\widetilde{\mathcal E}(\bs z_k;\mu_k).
$$
\end{lemma}

\begin{proof}
Apply Lemma~\ref{lm:decay1} with $\hat\mu=\mu_k$. Then
$$
\begin{aligned}
(1+c_k\alpha)\mathcal E(\bs z_{k+1};\mu_k)
\leq{}&
\mathcal E(\bs z_k;\mu_k)
+\frac1{2L}\|\nabla f_k(x_{k+1})\|^2
-\frac1{2L}\|\nabla f_k(x_k)\|^2
\\
&\quad
-\alpha\left(
d_k\Delta_f(x_{k+1},x^\star)
+
b_kD_{f_k}(x_{k+1},x^\star)
\right).
\end{aligned}
$$
We first change the shifted gradient using Lemma~\ref{lm:gradient_norm_ineq}:
$$
\frac1{2L}\|\nabla f_k(x_{k+1})\|^2
\leq
\frac1{2L}\|\nabla f_{k+1}(x_{k+1})\|^2
+
\frac{\Delta\delta_k\alpha^2}{2}
\langle\nabla f_k(x_{k+1}),x_{k+1}-x^\star\rangle.
$$
Using
$$
\begin{aligned}
\frac{\Delta\delta_k\alpha^2}{2}
\langle\nabla f_k(x_{k+1}),x_{k+1}-x^\star\rangle
={}&
\Delta\delta_k\alpha^2D_{f_k}(x_{k+1},x^\star)
+\frac{\Delta\delta_k\alpha^2}{2}
\Delta_f(x_{k+1},x^\star),
\end{aligned}
$$
and
$$
\Delta\delta_k\alpha^2D_{f_k}(x_{k+1},x^\star)
\leq
\Delta\delta_k\alpha^2\mathcal E(\bs z_{k+1};\mu_k),
$$
the first term can be absorbed into the left-hand side, which slightly weakens the contraction factor from $1+c_k\alpha$ to $r_k$. 
We then obtain
$$
\begin{aligned}
r_k^{-1}\mathcal E(\bs z_{k+1};\mu_k)
\leq{}&
\mathcal E(\bs z_k;\mu_k)
+\frac1{2L}\|\nabla f_{k+1}(x_{k+1})\|^2
-\frac1{2L}\|\nabla f_k(x_k)\|^2
\\
&\quad
-\alpha\left[
\left(
d_k-\frac{\Delta\delta_k\alpha}{2}
\right)\Delta_f(x_{k+1},x^\star)
+
b_kD_{f_k}(x_{k+1},x^\star)
\right].
\end{aligned}
$$
Since $\mu_{k+1}\geq\mu_k$,
$$
\mathcal E(\bs z_{k+1};\mu_{k+1})
\leq
\mathcal E(\bs z_{k+1};\mu_k).
$$
Thus the same inequality holds with $\mathcal E(\bs z_{k+1};\mu_{k+1})$ on the left-hand side.

By the definition of $\omega_k$,
$$
|\Delta_f(x_{k+1},x^\star)|
\leq
\omega_kD_f(x_{k+1},x^\star).
$$
Moreover, since $\mu_k=(1-\delta_k)\mu$ and $f$ is $\mu$-strongly convex,
$$
\begin{aligned}
D_{f_k}(x_{k+1},x^\star)
&=
D_f(x_{k+1},x^\star)
-\frac{\mu_k}{2}\|x_{k+1}-x^\star\|^2
\\
&\geq
D_f(x_{k+1},x^\star)
-\frac{\mu_k}{\mu}D_f(x_{k+1},x^\star)
=
\delta_kD_f(x_{k+1},x^\star).
\end{aligned}
$$
Therefore,
$$
\begin{aligned}
&
\left(
d_k-\frac{\Delta\delta_k\alpha}{2}
\right)\Delta_f(x_{k+1},x^\star)
+
b_kD_{f_k}(x_{k+1},x^\star)
\\
&\geq
\left[
b_k\delta_k
-
\left|
d_k-\frac{\Delta\delta_k\alpha}{2}
\right|\omega_k
\right]
D_f(x_{k+1},x^\star)
\geq0,
\end{aligned}
$$
where the last inequality follows from \eqref{eq:omega_condition}. It follows that
$$
\begin{aligned}
r_k^{-1}\mathcal E(\bs z_{k+1};\mu_{k+1})
\leq{}&
\mathcal E(\bs z_k;\mu_k)
+\frac1{2L}\|\nabla f_{k+1}(x_{k+1})\|^2
-\frac1{2L}\|\nabla f_k(x_k)\|^2.
\end{aligned}
$$
Subtracting $r_k^{-1}(2L)^{-1}\|\nabla f_{k+1}(x_{k+1})\|^2$ from both sides gives
$$
\begin{aligned}
r_k^{-1}\widetilde{\mathcal E}(\bs z_{k+1};\mu_{k+1})
\leq{}&
\widetilde{\mathcal E}(\bs z_k;\mu_k)
+
\frac{1-r_k^{-1}}{2L}
\|\nabla f_{k+1}(x_{k+1})\|^2.
\end{aligned}
$$
Since $r_k^{-1}\geq1$, the last term is nonpositive. Therefore,
$$
r_k^{-1}\widetilde{\mathcal E}(\bs z_{k+1};\mu_{k+1})
\leq
\widetilde{\mathcal E}(\bs z_k;\mu_k),
$$
which proves the claim.
\end{proof}

\subsection{Asymptotic Convergence Analysis}

We now prove the asymptotic rate $1-2\sqrt{2/\kappa}$ for HNAG$^{++}$ under the LAS assumption. For the finitely many initial steps, we use the unshifted estimate. Once the LAS modulus $\omega_k$ is small, we introduce partial shifts $\mu_k\uparrow\mu$ so that the asymmetry perturbation is absorbed by the positive shifted-Bregman term.

\begin{theorem}[Asymptotic rate $1-2\sqrt{2/\kappa}$]\label{thm:main-thm-C21}
Assume $f\in\mathcal S_{\mu,L}^{\rm LAS}(x^\star)$. Let $(x_k,y_k)$ be generated by Algorithm~\ref{alg:HNAGpp}, let $\omega_k$ be defined by \eqref{eq:def-omegak}, and set $K:=\min\{k\geq0:\omega_k\leq1/8\}$. Then there exist nonnegative sequences $\{\mu_k\}$, $\{\delta_k\}$, $\{b_k\}$, and $\{c_k\}$, and a constant $C>0$, such that
$$
\mu_k=(1-\delta_k)\mu,\qquad
\delta_k\downarrow0,\qquad
b_k\to0,\qquad
c_k=2-\sqrt{\delta_k}-b_k\to2,
$$
and
\RV{
$$
\frac{\mu}{2}\|y_{k+1}-x^\star\|^2
\leq
\widetilde{\mathcal E}(\bs z_{k+1};\mu_{k+1})
\leq
C\prod_{i=0}^{k}r_i.
$$}
Here
$$
\widetilde{\mathcal E}(\bs z_k;\mu_k)
:=
\mathcal E(\bs z_k;\mu_k)
-\frac1{2L}\|\nabla f_k(x_k)\|^2,
\qquad
f_k:=f_{-\mu_k},
$$
and
$$
r_k:=
\begin{cases}
(1+\alpha)^{-1}, & 0\leq k<K,\\[1mm]
\left(1+c_k\alpha-\Delta\delta_k\alpha^2\right)^{-1}, & k\geq K,
\end{cases}
\quad
\alpha=\sqrt{\frac{2}{\kappa}},
\quad
\Delta\delta_k:=\delta_k-\delta_{k+1}.
$$
Consequently, $\widetilde{\mathcal E}(\bs z_k;\mu_k)$ \RV{and $\|y_k-x^\star\|^2$} converge $R$-linearly with asymptotic rate
$$
r_\ast
=
\frac1{1+2\sqrt{2/\kappa}}
=
1-2\sqrt{2/\kappa}+O(\kappa^{-1})
\qquad\text{as }\kappa\to\infty.
$$
\end{theorem}

\begin{proof}
Since $f\in\mathcal S_{\mu,L}^{\rm LAS}(x^\star)$, we have $\omega_k\downarrow0$, so the index $K$ in the theorem statement is finite. The threshold $1/8$ is only a convenient choice ensuring $q_k\leq1/2$; any fixed sufficiently small threshold would work.

\medskip
\noindent
\textbf{Step 1: Initial finite segment.}
For $0\leq k<K$, we use the unshifted choice
$$
\delta_k=1,\qquad
\mu_k=0,\qquad
b_k=0,\qquad
d_k=0,\qquad
c_k=1.
$$
Then
$$
\widetilde{\mathcal E}(\bs z_{k+1};0)
\leq
(1+\alpha)^{-1}
\widetilde{\mathcal E}(\bs z_k;0),
\qquad
0\leq k<K.
$$

\medskip
\noindent
\textbf{Step 2: Choice of the partial shifts.}
For $k\geq K$, set
$$
q_k:=\omega_k^{1/3},\qquad
\delta_k:=q_k^2,\qquad
d_k:=1-q_k,\qquad
b_k:=d_kq_k.
$$
This choice gives
$$
b_k\delta_k=d_k\omega_k.
$$
Since $\omega_k\leq1/8$, we have $0\leq q_k\leq1/2$, and hence $d_k,b_k\geq0$. Moreover,
$$
\begin{aligned}
c_k
&:=
2-\sqrt{\delta_k}-b_k
=
1+(1-q_k)^2
\geq\frac54,\\
&q_k\to0,\qquad
\delta_k\downarrow0,\qquad
b_k\to0,\qquad
c_k\to2.
\end{aligned}
$$

\medskip
\noindent
\textbf{Step 3: Verification of Lemma~\ref{lm:shifted_energy_recursion}.}
Set
$$
r_k
:=
\left(1+c_k\alpha-\Delta\delta_k\alpha^2\right)^{-1}.
$$
Since
$$
c_k\geq\frac54,
\qquad
0\leq\Delta\delta_k\leq\delta_k=q_k^2\leq\frac14,
$$
and $\kappa=L/\mu\geq1$ implies $\alpha=\sqrt{2/\kappa}\leq\sqrt2$, we have
\begin{equation*}
\begin{aligned}
r_k^{-1}
&=
1+c_k\alpha-\Delta\delta_k\alpha^2
\geq
1+\frac54\alpha-\frac14\alpha^2
=
1+\frac{\alpha}{4}(5-\alpha)
>1.
\end{aligned}
\end{equation*}

It remains to verify \eqref{eq:omega_condition}. Since $d_k\geq1/2$,
$$
d_k-\frac{\Delta\delta_k\alpha}{2}
\geq
\frac12-\frac{\sqrt2}{8}
>0.
$$
Therefore,
\begin{equation*}
\left|
d_k-\frac{\Delta\delta_k\alpha}{2}
\right|\omega_k
\leq
d_k\omega_k
=
b_k\delta_k.
\end{equation*}
Lemma~\ref{lm:shifted_energy_recursion} now gives
$$
\widetilde{\mathcal E}(\bs z_{k+1};\mu_{k+1})
\leq
r_k\widetilde{\mathcal E}(\bs z_k;\mu_k),
\qquad
k\geq K.
$$
Hence
$$
\widetilde{\mathcal E}(\bs z_{k+1};\mu_{k+1})
\leq
\widetilde{\mathcal E}(\bs z_K;\mu_K)
\prod_{i=K}^{k}r_i.
$$
The finitely many initial terms can be absorbed into the constant. Specifically, let
$$
C
:=
1+
\max_{0\leq j\leq K}
\left\{
\widetilde{\mathcal E}(\bs z_{j+1};\mu_{j+1})
\left(\prod_{i=0}^{j}r_i\right)^{-1}
\right\},
$$
Then, for all $k\geq0$,
$$
\widetilde{\mathcal E}(\bs z_{k+1};\mu_{k+1})
\leq
C\prod_{i=0}^{k}r_i.
$$

\medskip
\noindent
\textbf{Step 4: Asymptotic rate.}
Since $c_k\to2$ and $\Delta\delta_k\to0$,
$$
r_k
\to
r_\ast
:=
\left(1+2\sqrt{2/\kappa}\right)^{-1}.
$$
Therefore,
$$
\begin{aligned}
\limsup_{k\to\infty}
\widetilde{\mathcal E}(\bs z_{k+1};\mu_{k+1})^{1/k}
&\leq
\limsup_{k\to\infty}
\left(
C\prod_{i=0}^{k}r_i
\right)^{1/k}
=
r_\ast.
\end{aligned}
$$
Hence $\widetilde{\mathcal E}(\bs z_k;\mu_k)$ converges $R$-linearly with asymptotic rate $r_\ast$. \RV{The bound in the theorem then gives the same asymptotic rate for $\|y_k-x^\star\|^2$.} $\Box$
\end{proof}

Several remarks are in order.

\begin{remark}
The parameters $(\delta_k,\mu_k,\omega_k,b_k)$, and hence the coercivity constant $c_k$, are introduced only for the analysis and do not enter the algorithm. HNAG$^{++}$ always has the global rate
$$
(1+\sqrt{2/\kappa})^{-1}
=
1-\sqrt{2/\kappa}+O(\kappa^{-1}),
$$
while the LAS analysis improves this to the asymptotic rate $1-2\sqrt{2/\kappa}+O(\kappa^{-1})$. The index $K$ is a conservative threshold in the proof; the asymptotic regime may begin earlier, for example, when $\Delta_f(x_{k+1},x^\star)$ is already a higher-order perturbation; see Proposition~\ref{prop:full-shift}.
\end{remark}

\begin{remark}
The asymptotic factor
$$
1-2\sqrt{2/\kappa}+O(\kappa^{-1})
$$
matches the best known asymptotic rate for $\mathcal C^2$ functions~\cite{vanScoy2025fastest_firstorder_twiceC2}. Our proof uses the ODE/Lyapunov structure and differs from the argument in~\cite{vanScoy2025fastest_firstorder_twiceC2}. Local regularity enters through the Bregman asymmetry ratio
$$
\frac{|\Delta_f(x,x^\star)|}{D_f(x,x^\star)}.
$$
This extends the analysis beyond the $\mathcal C^2$ setting to the larger class $\mathcal S_{\mu,L}^{\rm LAS}(x^\star)$; see Proposition~\ref{prop:directional-C2-LAS} for a sufficient directional condition.
\end{remark}

\begin{remark}\label{rm:NAG}
The same argument applies to lower-coercivity discretizations with $c_L=1$. In particular, HNAG with $\alpha=\sqrt{\rho}$ and NAG in the equivalent HNAG form attain the asymptotic leading rate
$$
1-2\sqrt{\rho}+O(\rho)
$$
for $f\in\mathcal S_{\mu,L}^{\rm LAS}(x^\star)$. When $\Delta_f(x_k,x^\star)=0$, this recovers the sharp quadratic NAG factor, consistent with~\cite{kim2018adaptive_restart_OGM}.
\end{remark}



\section{Numerical Experiments}

We evaluate the proposed HNAG$^+$ and HNAG$^{++}$ methods on convex optimization problems. We compare HNAG$^{++}$ with Nesterov's accelerated gradient (NAG)~\cite{nesterov1983method,nesterov2003introductory}, Triple Momentum (TM)~\cite{van2017fastest}, and C$^2$-Momentum (C2M)~\cite{vanScoy2025fastest_firstorder_twiceC2}. For each example, the random seed is fixed across all methods.

All internal variables are aligned, for example, $x_0=y_0$ for HNAG methods. We stop when
$$
\|\nabla f(x_k)\|\le 10^{-8}\|\nabla f(x_0)\|.
$$

Unless stated otherwise, convergence is measured by the squared $\ell^2$-error of the iterate that is not used for gradient evaluation. For HNAG$^+$ and HNAG$^{++}$, the Lyapunov function controls both the $x$- and $y$-components, but the coercivity in the $x$-component is not strictly positive, so we report $\|y_k-x^\star\|^2$. For NAG, TM, and C2M, the gradient is evaluated at the variable denoted by $y_k$, and the provable linear convergence is stated for the other iterate sequence, denoted by $x_k$. Accordingly, we report $\|x_k-x^\star\|^2$ for these methods. This distinction is only notational: in all cases, we plot the squared $\ell^2$-error of the iterate complementary to the one used for gradient evaluation, \RV{except in the final perturbed LAS example, where we plot the normalized Lyapunov quantity $\mathcal L_k/\mathcal L_0$}.

All plots are shown on a semilog scale, so that geometric decay $r^k$ appears as a straight line and the slope reflects the rate. A steeper slope indicates faster linear convergence.

When the reference solution $x^\star$ is not available, we approximate it by running NAG until $\|\nabla f(x_k)\|\le 10^{-8}\|\nabla f(x_0)\|$. This is sufficient because strong convexity gives $\|x_k-x^\star\|^2\leq\mu^{-2}\|\nabla f(x_k)\|^2$.

\subsection{Two-dimensional Poisson problem}
We solve the two-dimensional Poisson problem
$$
-\Delta u=b \quad \text{in }\Omega,\qquad u=0 \quad \text{on }\partial\Omega,
$$
on the unit square \(\Omega=[0,1]^2\), using linear finite elements on a uniform triangulation \(\mathcal T_h\) with mesh size $h$. The stiffness matrix \(A\) is assembled with the \texttt{iFEM} package~\cite{chen:2008ifem}, which yields the quadratic objective
$$
f(x)=\tfrac12(x-x^\star)^\top A(x-x^\star),
\qquad x^\star=0.
$$
The initial iterate \(x_0\) is drawn componentwise from \(\mathrm{Unif}(0,1)\). 

We take $h=1/160,\ 1/320,\ 1/640,\ 1/1280$. For each experiment, we run all methods five times and report the average runtime. 
For each mesh size \(h\), 
the eigenvalues of the stiffness matrix $A$ have closed-form expressions
$$
\lambda_{k,l} = 4\left(\sin^2\frac{k\pi h}{2}+\sin^2\frac{l\pi h}{2}\right),
\qquad k,l=1,\ldots,1/h-1.
$$
So 
$$
\begin{aligned}
\mu &= \lambda_{\min}=\lambda_{1,1}=8\sin^2\frac{\pi h}{2}\approx 2\pi^2 h^2, \quad \text{ and}\\
L &= \lambda_{\max}=\lambda_{1/h-1,1/h-1}=8\cos^2\frac{\pi h}{2}\approx 8.
\end{aligned}
$$
Then, the condition number \(\kappa(A)\) is
$$
\kappa(A) = \frac{\lambda_{\max}}{\lambda_{\min}}=\cot^2\frac{\pi h}{2}\approx \frac{4}{\pi^2 h^2}=\mathcal{O}(h^{-2})=\mathcal{O}(N)
$$
where \(N\) is the problem dimension. Thus halving \(h\) increases \(\kappa\) by a factor of four and doubles the iteration count for accelerated methods with \(\sqrt{\kappa}\) dependence. This behavior is confirmed by the results in Table~\ref{tab:compare_methods_Laplace2D_square}.


\begin{table}[htbp]
\centering
\renewcommand{\arraystretch}{1.25}
\resizebox{0.98\textwidth}{!}{
\begin{tabular}{lcccccccc}
\toprule
$N$      
& \multicolumn{2}{c}{25,281}
& \multicolumn{2}{c}{101,761}
& \multicolumn{2}{c}{408,321}
& \multicolumn{2}{c}{1,635,841} \\
\cmidrule(lr){2-3}\cmidrule(lr){4-5}\cmidrule(lr){6-7}\cmidrule(lr){8-9}
$\kappa$
& \multicolumn{2}{c}{$1.04\times10^4$}
& \multicolumn{2}{c}{$4.16\times10^4$}
& \multicolumn{2}{c}{$1.67\times10^5$}
& \multicolumn{2}{c}{$6.66\times10^5$} \\
\midrule
Method
& Iter & Time (s)
& Iter & Time (s)
& Iter & Time (s)
& Iter & Time (s) \\
\midrule
HNAG$^{++}$ 
& 916 & 0.10
& 1,619 & 0.62
& 2,879 & 4.56
& 5,049 & 35.45\\
HNAG$^+$  
& 1,490 & 0.14
& 2,859 & 1.05
& 5,578 & 8.86
& 11,178 & 79.89\\
TM     
& 1,490 & 0.15
& 2,859 & 1.12
& 5,578 & 9.42
& 11,178 & 81.76\\
NAG    
& 1,282 & 0.13
& 2,276 & 0.87
& 4,016 & 7.39
& 7,085 & 58.28\\
C2M    
& 1,065 & 0.13
& 2,056 & 0.85
& 4,006 & 7.32
& 7,971 & 63.08\\
\bottomrule
\end{tabular}}
\caption{Performance comparison on the two-dimensional Poisson problem.}
\label{tab:compare_methods_Laplace2D_square}
\end{table}

%

\begin{figure}[htbp]
    \centering
    \begin{minipage}{0.48\linewidth}
        \centering
\includegraphics[width=\linewidth]{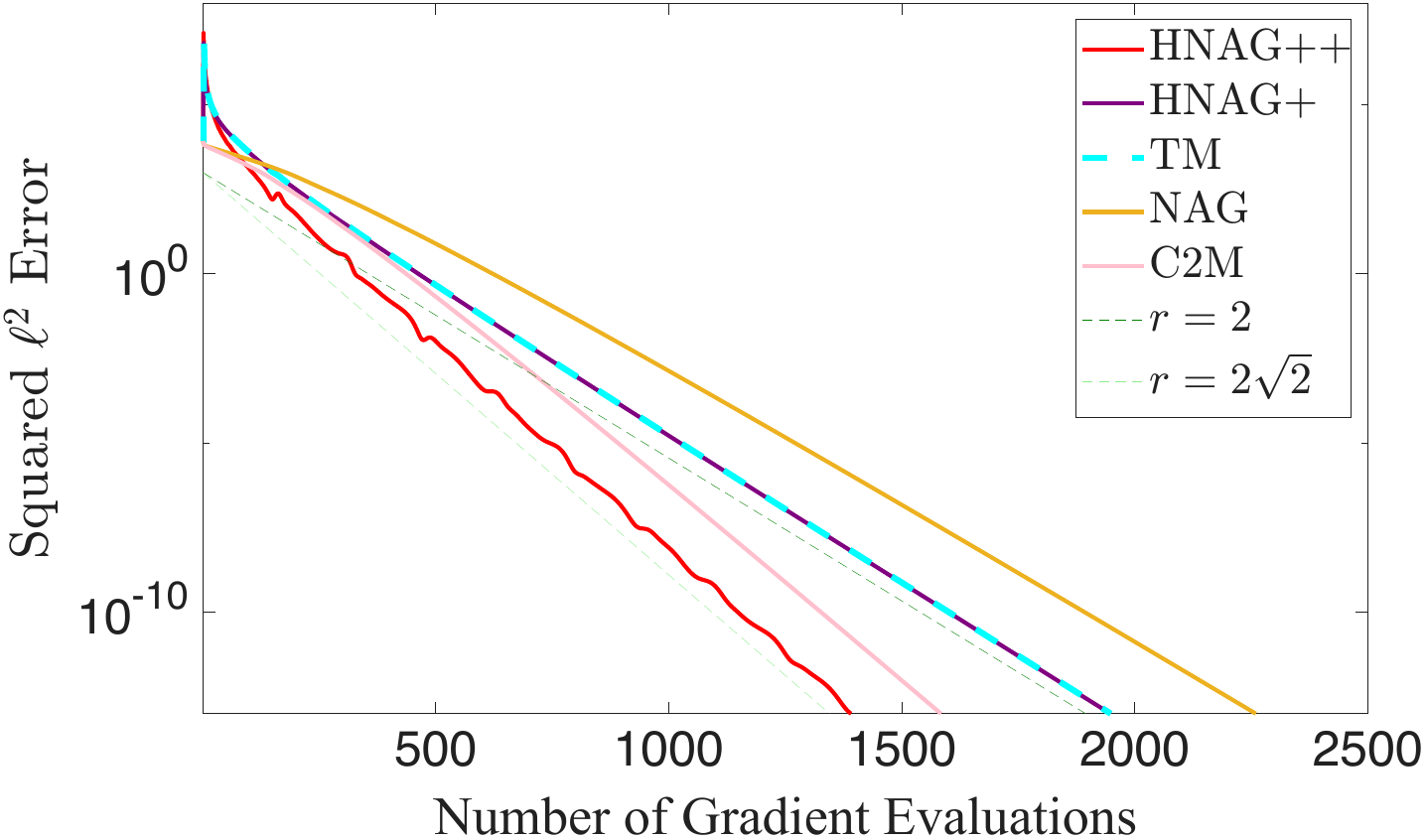}
\caption{2D linear Laplacian problem with  $h=1/160$, $n=25,281$, and $\kappa = 1.04\times10^4$.} 
\label{fig:laplace2d}
    \end{minipage}
    \hfill
    \begin{minipage}{0.48\linewidth}
        \centering
\includegraphics[width=0.98\linewidth]{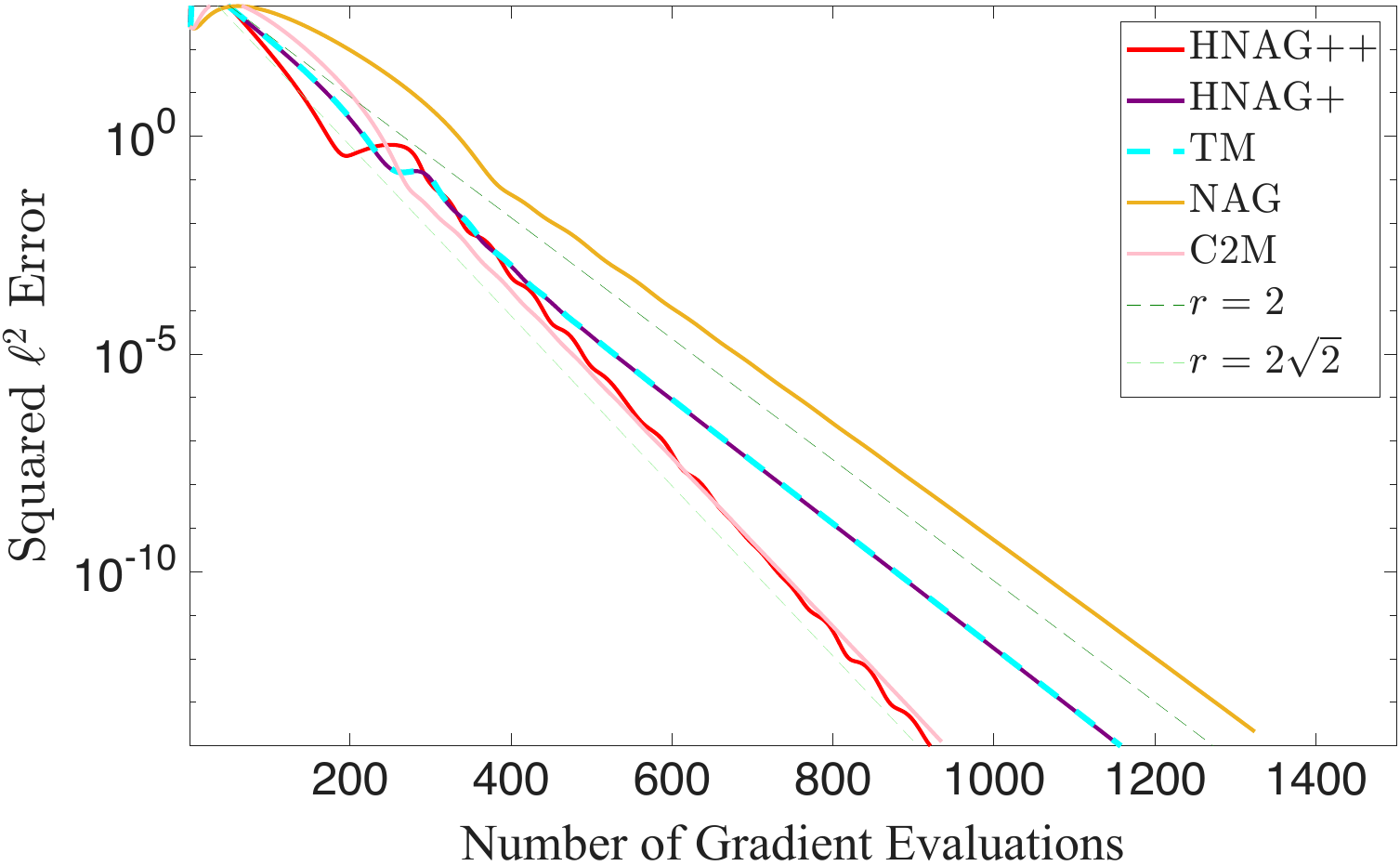}
        \caption{Logistic regression problem \eqref{eq:log-reg} with $\lambda=0.1$, $d=1000$, and $m=50$.}
        \label{fig:lor}
    \end{minipage}
\end{figure}

{HNAG$^{++}$ performs best among the methods tested. In particular, it reaches the same accuracy as NAG while reducing the computation time by about \(30\%\).} As shown in Fig.~\ref{fig:laplace2d}, the error of HNAG$^{++}$ decays linearly at the predicted rate \((1+2\sqrt{2/\kappa})^{-1}\). The other methods also show linear convergence: C2M follows its asymptotic rate \(1-2\sqrt{2/\kappa}\); HNAG$^+$ and TM attain the rate \(1-2/\sqrt{\kappa}\) and behave almost identically; and NAG follows the rate \(1-2/\sqrt{\kappa}\) proved in \cite{kim2018adaptive_restart_OGM}; see also Remark~\ref{rm:NAG}.

%
%

\subsection{Regularized logistic regression}

We consider the regularized logistic regression problem
\begin{equation}\label{eq:log-reg}
    f(x)
    = \sum_{i=1}^m \log\bigl(1+\exp(-b_i a_i^{\top} x)\bigr)
      + \frac{\lambda}{2}\|x\|^2,
\end{equation}
where $(a_i,b_i)\in\mathbb{R}^d\times\{-1,1\}$.  
The function $f$ is $\mu$-strongly convex with $\mu=\lambda$ and $L$-smooth with
$$
L = \frac{1}{4}\lambda_{\max}\!\Bigl(\sum_{i=1}^m a_i a_i^{\top}\Bigr) + \lambda .
$$
{Moreover, $f\in \mathcal C^2$, and hence $f$ is LAS.}

In the experiments, the data $a_i$ and $b_i$ are generated from normal and Bernoulli distributions, respectively, with $\lambda=0.1$, $d=1000$, and $m=50$.

Figure~\ref{fig:lor} reports the decay of $\ell^2$ errors.  
All methods exhibit accelerated linear convergence.  
HNAG$^{++}$ and C2M achieve the fastest convergence, matching the rate $1-2\sqrt{2/\kappa}$ asymptotically, while HNAG$^+$ and TM follow the rate $1-2/\sqrt{\kappa}$.  
{NAG converges more slowly in the pre-asymptotic regime, although its asymptotic rate is still \(1-2/\sqrt\kappa\); see Remark~\ref{rm:NAG}}.

\vspace{1.6em}

\subsection{\RV{Piecewise quadratic function with smooth perturbation}}

We test HNAG$^{++}$ on a LAS function with nonzero Bregman asymmetry. Let
$$
\mu=\lambda_1<\lambda_2<\cdots<\lambda_{d+1}=L,
$$
and define
\begin{equation*}
\phi_i(t):=
\begin{cases}
\lambda_i t^2, & t<0,\\[1mm]
\lambda_{i+1} t^2, & t\ge 0,
\end{cases}
\qquad i=1,\ldots,d.
\end{equation*}
Consider
\begin{equation}\label{eq:pwquad2}
f(x)=\frac12\sum_{i=1}^d \phi_i(x_i)+ \varepsilon \sum_{i=1}^d \sin^2(x_i), \qquad 0<\varepsilon<\mu/4.
\end{equation}
Then $f\in\mathcal C^1(\mathbb R^d)$ but $f\notin\mathcal C^2(\mathbb R^d)$, because each $\phi_i''$ jumps at $0$. The piecewise quadratic part has diagonal Hessian with entries in $[\mu,L]$ on each orthant. The perturbation has second derivative bounded in absolute value by $2\varepsilon$. Hence $f\in\mathcal S_{\mu-2\varepsilon,L+2\varepsilon}$ with minimizer $x^\star=0$. It also has nonzero Bregman asymmetry, while
$$
|\Delta_f(x,0)|=O(\|x\|^4),
\qquad
D_f(x,0)\geq\frac{\mu-2\varepsilon}{2}\|x\|^2.
$$
Hence
$$
\frac{|\Delta_f(x,0)|}{D_f(x,0)}
=
O(\|x\|^2)
\to0
\qquad\text{as }x\to0.
$$
Thus this example directly tests the LAS perturbation analysis in Theorem~\ref{thm:main-thm-C21}.

We set $d=1000$, $\mu=0.005$, $L=10^4$, and $\varepsilon=0.01\mu$. Then
$$
\mu_{\rm eff}=\mu-2\varepsilon=0.98\mu,
\qquad
L_{\rm eff}=L+2\varepsilon=L+0.02\mu,
\qquad
\kappa_{\rm eff}=\frac{L_{\rm eff}}{\mu_{\rm eff}}.
$$
The parameters are
$$
\lambda_i=\mu+\frac{(i-1)(L-\mu)}{d},\qquad i=1,\ldots,d+1.
$$
The initial iterate is drawn componentwise from $\mathrm{Unif}(0,1)$. 

In this example, we report both the normalized Lyapunov quantity and the squared iterate error. The Lyapunov plot shows the quantity controlled by the proof, while the squared-error plot shows the practical convergence of the iterates. For HNAG$^{++}$, HNAG$^+$, and TM, we use $\mathcal L_k=\mathcal E(\bs z_k;\mu_{\rm eff})$, with $\mathcal E$ defined in \eqref{eq:lya-muhat}, normalized by its initial value. For NAG and C2M, we plot the squared $\ell^2$-error of the corresponding convergent auxiliary sequence, normalized by its initial value.

\begin{figure}
  \centering
  \begin{minipage}{0.48\linewidth}
    \centering
    \includegraphics[width=\linewidth]{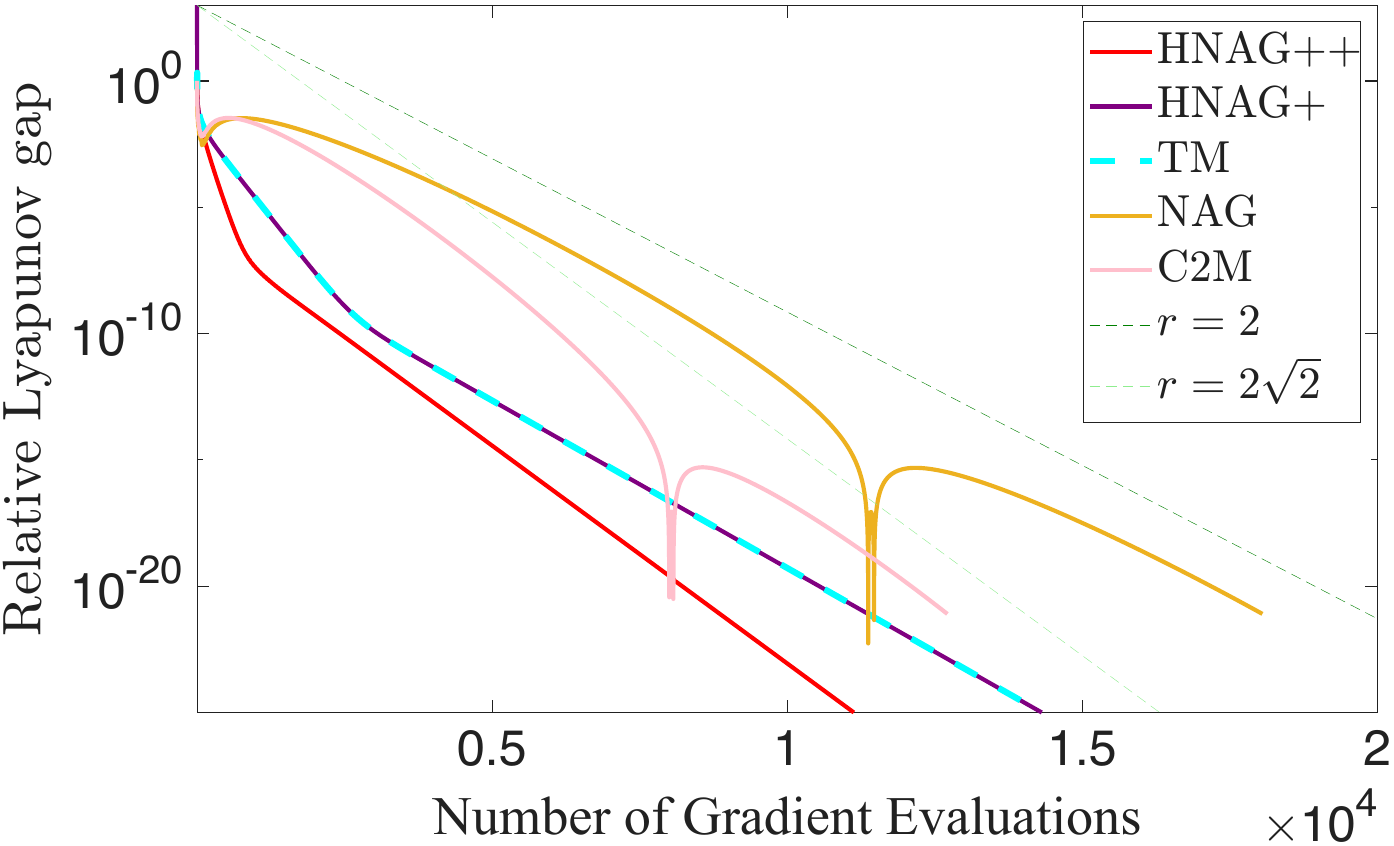}
(a) Normalized Lyapunov quantity.
  \end{minipage}
  \hfill
  \begin{minipage}{0.48\linewidth}
    \centering
    \includegraphics[width=\linewidth]{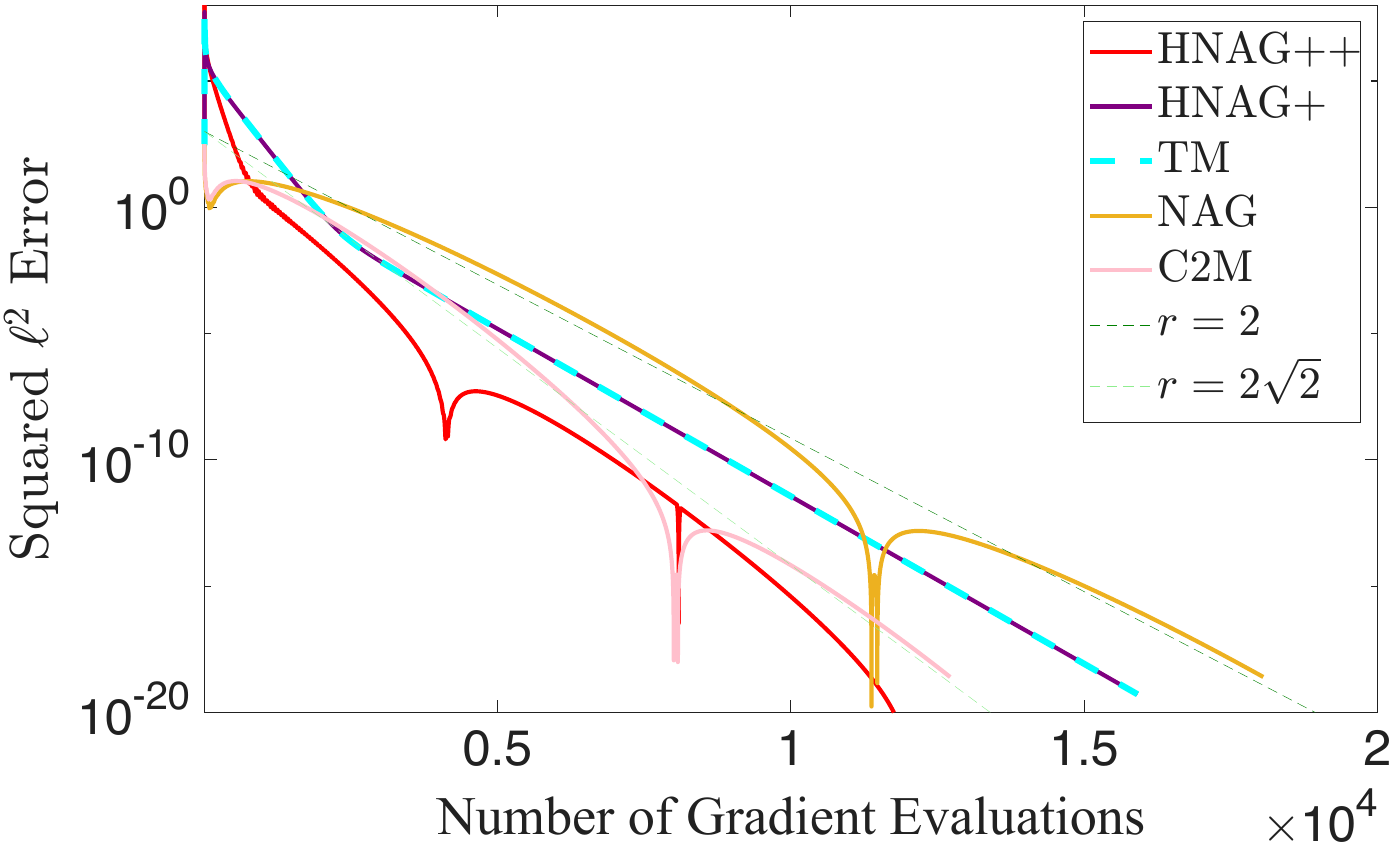}
(b) Squared iterate error.
  \end{minipage}
  \caption{Perturbed LAS example \eqref{eq:pwquad2} with $d=1000$, $\mu=0.005$, $L=10^4$, and $\varepsilon=0.01\mu$.}
  \label{fig:pwnonquad}
\end{figure}

Figure~\ref{fig:pwnonquad} shows the decay. HNAG$^{++}$ has the fastest Lyapunov decay, close to the rate $1-2\sqrt{2/\kappa_{\rm eff}}$. HNAG$^+$ and TM follow the rate $1-2/\sqrt{\kappa_{\rm eff}}$. NAG and C2M also converge, but show visible oscillations. Such oscillations also appear in the squared-error curve of HNAG$^{++}$, as accelerated gradient methods generally do not guarantee monotone decay of the objective value or iterate error. The asymptotic rates of NAG and C2M remain $1-2/\sqrt{\kappa_{\rm eff}}$ and $1-2\sqrt{2/\kappa_{\rm eff}}$, respectively; see Remark~\ref{rm:NAG}.

This example confirms that HNAG$^{++}$ remains effective when the Bregman asymmetry is nonzero but asymptotically negligible.

\FloatBarrier

\medskip
{\small\noindent\textbf{Data Availability Statement}\par
No external datasets were used for the research described in the article. The numerical data were generated synthetically as described in Section~6.\par}

\section*{Acknowledgments}
The authors thank the reviewers for their careful reading and constructive comments, which significantly improved the results and the presentation of the paper. This work was partially supported by the National Science Foundation under grant DMS--2309785.

\bibliographystyle{spmpsci}
\bibliography{reference,Optimization}
\end{document}